\theoremstyle{plain}
\newtheorem{theorem}{Theorem}[section]
\newtheorem{corollary}[theorem]{Corollary}
\newtheorem{proposition}[theorem]{Proposition}
\newtheorem{lemma}[theorem]{Lemma}
\theoremstyle{definition}
\theoremstyle{remark}
\newtheorem{remark}[theorem]{Remark}
\numberwithin{equation}{section}\theoremstyle{plain}
\newcommand{\I}{\mathcal{I}}
\renewcommand{\1}{\textbf{1}}
\newcommand{\A}{{\mathcal A}}
\newcommand{\B}{{\mathcal B}}
\newcommand{\C}{{\mathcal C}}
\newcommand{\D}{{\mathcal D}}
\newcommand{\Ll}{{\mathcal L}}
\newcommand{\Z}{{\mathcal Z}}
\newcommand{\M}{\mathcal{M}}
\newcommand{\E}{{\mathcal E}}
\newcommand{\W}{{\mathcal W}}
\newcommand{\Rep}{\operatorname{Rep}}
\newcommand\Aut{\operatorname{Aut}}
\newcommand\Irr{\operatorname{Irr}}
\newcommand\FPdim{\operatorname{FPdim}}
\newcommand\vect{\operatorname{Vec}}
\newcommand\svect{\operatorname{sVec}}
\newcommand\id{\operatorname{id}}
\newcommand\ad{\operatorname{ad}}
\newcommand\rev{\operatorname{rev}}
\newcommand\Hom{\operatorname{Hom}}
\begin{document}
\title[Non-degenerate braided fusion categories]{On weakly group-theoretical
non-degenerate braided fusion categories}
\author{Sonia Natale}
\address{Facultad de Matem\'atica, Astronom\'\i a y F\'\i sica.
Universidad Nacional de C\'ordoba. CIEM -- CONICET. Ciudad
Universitaria. (5000) C\'ordoba, Argentina}
\email{natale@famaf.unc.edu.ar
\newline \indent \emph{URL:}\/ http://www.famaf.unc.edu.ar/$\sim$natale}

\thanks{Partially supported by  CONICET and SeCYT--UNC}

\keywords{Braided fusion category; braided $G$-crossed fusion category;
Tannakian category; Witt class; solvability}

\subjclass[2010]{18D10}

\date{\today}

\begin{abstract} We show that the Witt class of a weakly group-theoretical
non-degenerate braided fusion category belongs 
to the subgroup generated by classes of non-degenerate pointed braided fusion
categories and Ising braided categories. This applies in particular to solvable
non-degenerate braided fusion categories.
We also give some sufficient conditions for a braided fusion category to be
weakly group-theoretical or solvable in terms of the
factorization of its Frobenius-Perron dimension and the Frobenius-Perron
dimensions of its simple objects.
As an application, we prove that every non-degenerate braided fusion category
whose Frobenius-Perron dimension is a natural number less than $1800$, or an odd
natural number less than $33075$, is
weakly group-theoretical.
   \end{abstract}

\maketitle

\section{Introduction}

A fusion category $\C$ is called \emph{weakly group-theoretical} if it is
categorically Morita equivalent to a nilpotent fusion category, that is, if
there exists
an indecomposable module category $\M$ such that $\C^*_{\M}$ is a nilpotent
fusion category. In particular, every weakly group-theoretical fusion category
has integer Frobenius-Perron
dimension. If,   furthermore, $\C$ is Morita equivalent to a cyclically
nilpotent fusion category, then $\C$ is called \emph{solvable}. 
Equivalently, $\C$ is  solvable if there exist sequences $\vect = \C_0, \dots,
\C_n = \C$, of fusion categories, and $G_1, \dots, G_n$, of cyclic groups of
prime order  such that for all $1 \leq i \leq n$, $\C_i$ is a
$G_i$-equivariantization or a $G_i$-extension of $\C_{i-1}$.
We refer the reader to \cite{ENO2} for other characterizations and main
properties of weakly group-theoretical and related fusion categories. 

\medbreak 
An important class of fusion categories is that of braided fusion categories,
that is, fusion categories $\C$ endowed with natural isomorphisms $c: X \otimes
Y \to Y \otimes X$, $X, Y \in \C$, called a \emph{braiding}, subject to
appropriate axioms. 
Two extreme classes of braided fusion categories, so-called symmetric and
non-degenerate braided fusion categories, appear related to the square of the
braiding.  

Symmetric fusion categories have been classified by Deligne \cite{deligne}. On
the other side, a number of important results concerning the structure of a
non-degenerate braided fusion category have been established in the literature.
A non-degenerate braided fusion category endowed with a compatible
ribbon  structure is called a \emph{modular} category. Modular
categories have many applications in
distinct areas of mathematics and mathematical physics, for instance, in
low-dimensional topology, they constitute an important tool in
the construction of invariants of knots and $3$-manifolds. See \textit{e.g.}
\cite{BK, turaev-b}.

\medbreak The group of Witt classes of non-degenerate braided
fusion categories, denoted $\W$, was introduced in \cite{witt-nondeg}. 
Two non-degenerate braided fusion categories $\C_1$ and $\C_2$ are called
\emph{Witt equivalent} if there exist fusion categories $\D_1$ and $\D_2$ such
that $\C_1 \boxtimes \Z(\D_1) \cong \C_2 \boxtimes \Z(\D_2)$ as braided tensor
categories, where $\Z(\D_i)$ denotes the Drinfeld center of the fusion category
$\D_i$, $i = 1, 2$. 

The Witt group $\W$ consists of equivalence classes of
non-degenerate brai\-ded fusion categories under this equivalence relation with
multiplication induced by Deligne's tensor product $\boxtimes$. The unit
element is the class of the category $\vect$ of finite-dimensional vector spaces
over the base field $k$ and the inverse of the class of a non-degenerate braided
fusion category  $\C$ is the class of the reverse braided fusion category
$\C^{\rev}$. 
This endows $\W$ with the structure of an (infinite countable) abelian group.

The explicit determination of the structure of the group $\W$ and the relations
amongst its elements are pointed out in \cite{witt-nondeg} as relevant problems
in connection with the classification 
of fusion categories.

\medbreak Let $\W_{pt}$ and $\W_{Ising}$
denote, respectively, the subgroup of Witt classes of pointed non-degenerate
fusion categories and the subgroup generated by Witt classes of Ising braided
categories. 

Recall that an \emph{Ising braided category} is a  non-pointed braided fusion
category of Frobenius-Perron dimension $4$.   Ising braided categories were
classified in \cite[Appendix B]{DGNOI}; it is known that they fall into $8$
equivalence classes and all of them are non-degenerate. If $\I$ is an Ising
braided category, then the pointed subcategory $\I_{pt}$
is the unique nontrivial (symmetric) subcategory of $\I$, and it is equivalent to
the category $\svect$ of super-vector spaces. Besides, $\I$ has a unique
non-invertible simple object of 
Frobenius-Perron dimension $\sqrt 2$.  

\medbreak The subgroups $\W_{pt}$ and $\W_{Ising}$ are explicitly described in
\cite[Appendix A.7 and Appendix B]{DGNOI};  see also \cite[Sections 5.3 and 6.4
(3)]{witt-nondeg}. We have that $\W_{Ising}$ is
isomorphic to the cyclic group of
order $16$. On the other hand, if $\W_{pt}(p)$ denotes the group of classes of
metric $p$-groups, we have an isomorphism  $\W_{pt} \cong \bigoplus_{p \text{
prime }}
\W_{pt}(p)$. In addition,  $\W_{pt}(2) \simeq \mathbb Z_8 \oplus \mathbb Z_2$, 
$\W_{pt}(p) \simeq \mathbb Z_4$, if $p = 3 (\text{mod } 4)$, and $\W_{pt}(p)
\simeq \mathbb Z_2 \oplus \mathbb Z_2$, if $p = 1 (\text{mod } 4)$. 

\medbreak In this paper we show that if $\C$ is a non-degenerate braided fusion
category such that $\C$ is weakly group-theoretical, then the Witt class $[\C]$
of $\C$ belongs to the subgroup generated by $\W_{pt}$ and $\W_{Ising}$. If,
moreover, $\C$ is integral, then $[\C] \in \W_{pt}$. See Theorem \ref{wgt-witt}.

The proof of Theorem \ref{wgt-witt} is given in Section \ref{wgt}. It relies on
results of the paper \cite{witt-nondeg}. It makes use as well of  
the notion of a braided group-crossed fusion category introduced by Turaev 
\cite{turaev, turaev2} and its main properties, in particular, its connection
with the existence of nontrivial Tannakian subcategories in a braided fusion
category. These results are recalled in Sections \ref{tens-c} and
\ref{g-crossed}. Using these tools, we also prove in Section
\ref{solvable} a related result (Theorem \ref{sol-non-deg}) on the structure of
solvable braided fusion
categories. 

\medbreak Let $\widetilde\W$ be the subgroup of $\W$ generated by Witt
equivalence
classes of  the fusion categories $\C(\mathfrak g, l)$ of
integrable highest weight modules of level $l \in \mathbb Z_+$ over the
affinization of a simple finite-dimensional Lie algebra $\mathfrak g$. 
It is shown in \cite{witt-nondeg} that $\W_{pt}, \W_{Ising} \subseteq
\widetilde\W$.
Conjecturally, $\widetilde\W$ coincides with the subgroup $\W_{un}$ of Witt
classes of pseudo-unitary non-degenerate braided fusion categories \cite[Question
6.4]{witt-nondeg}. 

On the other side, it is also conjectured that every fusion category of integer
Frobenius-Perron dimension is weakly group-theoretical \cite[Question 2]{ENO2}.
As a consequence of Theorem \ref{wgt-witt}, we obtain that for every 
non-degenerate braided fusion category $\C$ such that $\C$ is weakly
group-theoretical, then $[\C] \in \widetilde\W$. 

\medbreak One of the main results of \cite{ENO2} establishes the analogue of
Burnside's
$p^aq^b$-theorem for fusion categories, namely, that any fusion category $\C$
whose Frobenius-Perron dimension is $p^aq^b$,  where $p$ and $q$ are prime
numbers and $a, b$ are non-negative integers, is solvable.
Some solvability results for braided fusion categories have been obtained in
\cite{char-deg, cd2-wint}. In particular, if $\C$ is a braided fusion category 
such that the Frobenius-Perron dimensions of simple objects of
$\C$ are $\leq 2$, or if $\FPdim \C$ is odd and the Frobenius-Perron dimensions
of simple objects of $\C$ are powers of a fixed prime number, then $\C$ is
solvable.

\medbreak Combining the main properties of braided group-crossed fusion
categories with
the
methods developed in the paper \cite{ENO2}, we also
give in Section \ref{suff-cond} some further sufficient conditions for a braided
fusion
category to be
solvable or weakly group-theoretical. We show that every weakly
integral braided fusion category whose Frobenius-Perron dimensions of simple
objects are powers of a fixed prime number is always solvable. See Theorem
\ref{fpdim-pn}. This extends the previously mentioned results in \cite{char-deg,
cd2-wint}. 

In addition, we show that every non-degenerate braided fusion category
$\C$ whose Frobenius-Perron dimension factorizes in the form $\FPdim \C =
p^aq^bc$,
where $p$ and $q$ are prime numbers, $a$ and $b$ are nonnegative integers, and $c$ is a square-free
natural number, is necessarily weakly group-theoretical. See Theorem
\ref{fact-wgt}. 

In Section \ref{low-dim} we apply this result  to
show in Theorems \ref{less1800} and \ref{oddless33075}, respectively, that every
weakly integral non-degenerate braided
fusion category of Frobenius-Perron dimension less than $1800$ is weakly
group-theoretical and moreover, it is solvable if its  Frobenius-Perron
dimension is odd and less than $33075$.
   
\subsection*{Acknowledgement} This paper was written during a visit to the
Institute des Hautes \' Etudes Scientifiques, France. The author is grateful to
the IHES for the outstanding hospitality and the excellent atmosphere.

\section{Preliminaries and notation} 

We shall work over an algebraically closed field $k$ of characteristic zero. The
category of finite dimensional vector spaces
over $k$ will be denoted by $\vect$. A fusion category over $k$
is a semisimple rigid monoidal category over $k$ with finitely many isomorphism
classes of simple objects, finite-dimensional Hom spaces, and such that the unit
object $\1$ is simple.  We
refer the reader to \cite{ENO, ENO2, DGNOI} for the main notions about fusion
categories and braided fusion categories used throughout.
Unless otherwise stated, all tensor categories will be assumed to be strict.

\subsection{Frobenius-Perron dimensions} Let $\C$ be a fusion category. The
Frobenius-Perron dimension of a
simple object $X \in \C$ is, by definition, the Frobenius-Perron eigenvalue of
the matrix of left multiplication by the class of $X$ in the basis $\Irr(\C)$ of
the Grothendieck ring of $\C$ consisting of isomorphism classes of simple
objects. The Frobenius-Perron dimension of $\C$ is the number $\FPdim \C =
\sum_{X \in \Irr(\C)} (\FPdim X)^2$. The category $\C$ is called integral  if
$\FPdim X \in \mathbb Z$, for all simple object $X \in \C$, and it is called
weakly integral if $\FPdim \C \in \mathbb Z$. 

If $\C$ is a weakly integral fusion category, then $(\FPdim X)^2 \in \mathbb Z$, for
all simple object $X \in \C$ \cite[Proposition 8.27]{ENO}. This implies, in
particular, that a fusion subcategory of $\C$ is also weakly integral. On the
other hand, if $\C$ is weakly integral (respectively, integral) and $F: \C \to
\D$ is a dominant (or surjective) tensor functor, then $\D$ is weakly integral (respectively, integral)
as well; see \cite[Proposition 2.12]{indp-exact}, \cite[Corollary 8.36]{ENO}.

\subsection{Nilpotent and weakly group-theoretical fusion categories} Let $G$ be
a finite group. A $G$-grading on a fusion category $\C$ is a decomposition $\C = \oplus_{g\in G} \C_g$, such that $\C_g \otimes \C_h \subseteq \C_{gh}$ and $\C_g^* \subseteq \C_{g^{-1}}$, for all $g, h \in G$. 
The fusion category $\C$ is called a \emph{$G$-extension} of a
fusion category $\D$ if there is a faithful grading $\C = \oplus_{g\in G} \C_g$
with neutral component $\C_e \cong \D$. 

If $\C$ is any fusion category, there exist a finite group $U(\C)$, called the
\emph{universal grading group} of $\C$, and a canonical faithful grading $\C =
\oplus_{g \in U(\C)}\C_g$, with neutral component $\C_e = \C_{\ad}$, where
$\C_{\ad}$ is the adjoint
subcategory of $\C$, that is, the fusion subcategory generated by $X\otimes X^*$, $X \in \Irr(\C)$.

A fusion category $\C$ is (cyclically) \emph{nilpotent} if there exists a
sequence of fusion categories $\vect = \C_0 \subseteq \C_1 \dots \subseteq \C_n = \C$, and finite
(cyclic) groups $G_1, \dots, G_n$, such that for all $i = 1, \dots, n$, $\C_i$
is a $G_i$-extension of $\C_{i-1}$.

\medbreak Dual to the notion of a group extension, we have the notion of an
equivariantization. Consider  an action of a finite group $G$ on a fusion
category $\C$ by tensor autoequivalences $\rho: \underline
G \to \underline \Aut_{\otimes} \, \C$.
The \emph{equivariantization} of $\C$ with respect to the action $\rho$, denoted
$\C^G$, is a fusion category whose objects are pairs  $(X, \mu)$, such that $X$
is an object of $\C$ and $\mu = (\mu^g)_{g \in G}$, is a collection of
isomorphisms $\mu^g:\rho^gX \to X$, $g \in G$, satisfying appropriate
compatibility conditions.  

The forgetful functor $F: \C^G \to \C$, $F(X, \mu) = X$,
is a dominant tensor functor that gives rise to a central exact sequence of
fusion categories $\Rep G \to \C^G \to \C$ \cite{indp-exact}, where $\Rep G$ is
the category of finite-dimensional representations of $G$.

\medbreak Two fusion categories $\C$ and $\D$
are \emph{Morita equivalent} if $\D$ is equivalent to the dual
$\C^*_{\mathcal M}$ with respect to an indecomposable module
category $\mathcal M$.

A  fusion category $\C$ is called \emph{weakly group-theoretical} (respectively,
\emph{solvable}) if it is Morita equivalent to a nilpotent (respectively,
cyclically nilpotent) fusion category. 

It is shown in \cite[Proposition 4.1]{ENO2} that the class of weakly
group-theoreti\-cal fusion categories is stable under the operations of taking 
extensions, equivariantizations, Morita equivalent categories, tensor products,
Drinfeld center, fusion subcategories and components of quotient categories.
Also, the class of solvable fusion categories is stable under taking extensions
and equivariantizations
by solvable groups, Morita equivalent categories, tensor products, Drinfeld
center, fusion subcategories and components of quotient categories.

\subsection{Braided fusion categories} A braiding in a fusion category $\C$ is a
 natural isomorphism
$c_{X,Y} : X \otimes Y \rightarrow Y \otimes X$, $X, Y \in \C$, subject to the
hexagon
axioms. A braided fusion category is a fusion category endowed with a braiding. 

Suppose $\C$ is a braided fusion category. The reverse braided fusion
category
will be denote by $\C^{\rev}$; thus, if $c_{X,
Y}: X \otimes Y \to Y \otimes X$ denotes the braiding of $\C$, then $\C^{\rev} =
\C$ as a fusion category, with braiding $c^{\rev}_{X, Y} = c_{Y, X}^{-1}$, for
all objects $X, Y$. 

\medbreak If $\D$ is a fusion subcategory of a braided fusion category $\C$,
the M\" uger centralizer of $\D$ in $\C$ will be denoted by $\Z_2(\D, \C)$, or
also by $\D'$ when there is no ambiguity. Thus $\Z_2(\D, \C)$ is the full fusion
subcategory generated by all objects $X \in \C$ such that $c_{Y, X}c_{X, Y} =
\id_{X \otimes Y}$, for all objects $Y \in \D$.

\medbreak The M\" uger (or symmetric) center of $\C$ will be denoted by
$\Z_2(\C) : = \Z_2(\C, \C)$. 
The category $\C$ is called \emph{symmetric} if $\Z_2(\C) = \C$. If $\C$ is any
braided fusion category, its M\" uger center $\Z_2(\C)$ is a symmetric fusion
subcategory of $\C$.
On the opposite extreme,  $\C$ is  called 
\emph{non-degenerate} (respectively, \emph{slightly degenerate})
if $\Z_2(\C) \cong \vect$ (respectively, if $\Z_2(\C) \cong \svect$).

\medbreak For a fusion category $\C$, the Drinfeld center of
$\C$ will be
denoted $\Z(\C)$. It is known that $\Z(\C)$ is a braided non-degenerate fusion
category
of Frobenius-Perron dimension $\FPdim \Z(\C) = (\FPdim \C)^2$. Necessary and
sufficient conditions for a braided fusion category to be equivalent to the
center of some fusion category are given in \cite{witt-nondeg}.

\medbreak
Let $G$ be a finite group. The fusion category of
finite dimensional representations of $G$ will be denoted by $\Rep G$. This is a
symmetric fusion category with respect to the canonical braiding.
A braided fusion category $\E$ is called Tannakian, if $\E \cong \Rep G$ for
some finite group $G$ as symmetric fusion categories.

A  Theorem of Deligne \cite{deligne}, states that every
symmetric fusion
category $\Ll$ is super-Tannakian, meaning that there exist a finite group $G$
and a central element $u \in G$ of order $2$, such that $\Ll$ is equivalent to
the category $\Rep(G, u)$ of representations of $G$ on finite-dimensional
super-vector spaces where $u$ acts as the parity operator.  

\medbreak Hence if $\Ll \cong
\Rep(G, u)$ is a symmetric fusion category, then $\E = \Rep G/u$ is a
Tannakian subcategory of $\Ll$ and $\FPdim \E = \FPdim \Ll / 2$; in
particular, if $\FPdim \Ll > 2$, then $\Ll$ necessarily contains a Tannakian
subcategory, and a non-Tannakian symmetric fusion category of Frobenius-Perron
dimension $2$ is equivalent to the category $\svect$ of finite-dimensional
super-vector spaces. See
\cite[Section 2.4]{ENO2}.

\section{Connected \' etale algebras in braided fusion categories}\label{tens-c}

Let $\C$ be a braided fusion category. Recall from \cite{witt-nondeg} that a
separable commutative algebra $A \in \C$ is called an \emph{\' etale algebra} in
$\C$. If $\Hom_\C(\1, A) \cong k$, then $A$ is called \emph{connected}.

Let $A \in \C$ be a connected \' etale algebra. Let also $\C_A$ denote the
category of right $A$-modules in $\C$ and $\C_A^0$ the category of dyslectic
$A$-modules. If $\C$ is a non-degenerate braided fusion category, then there is
an equivalence of braided fusion categories 
\begin{equation}\label{equiv} \C \boxtimes (\C_A^0)^{\rev} \cong \Z(\C_A),
\end{equation}
such that the restriction of the forgetful functor $U: \Z(\C_A) \to \C_A$ to $\C
\cong \C \boxtimes \vect$ is isomorphic to the free module functor $F_A: \C \to
\C_A$, $F_A(X) = X \otimes A$. 
See \cite[Corollary 3.30 and Remark 3.31 (i)]{witt-nondeg}. It follows from this
that $\C_A^0$ is a
non-degenerate braided fusion category and $\FPdim \C_A^0 = \FPdim \C / (\FPdim
A)^2$. Moreover, $(\C_A^0)^{\rev} \simeq \Z_2(\C, \Z(\C_A))$ as braided fusion
categories.

\begin{proposition}\label{cor-etale} Let $\C$ be a non-degenerate braided fusion
category. Suppose
that $A \in \C$ is a connected \' etale algebra. Then $\C$ is weakly integral 
(respectively, integral, weakly group-theoretical, solvable or
group-theoretical) if and only if $\C_A$ is weakly integral  (respectively,
integral, weakly group-theoretical, solvable or group-theoretical).
\end{proposition}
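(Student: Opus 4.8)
The plan is to exploit two complementary functorial relations between $\C$ and $\C_A$. Since $A$ is a connected \'etale algebra, $\C_A$ is itself a fusion category, and the free module functor $F_A\colon \C \to \C_A$, $F_A(X) = X \otimes A$, is a dominant tensor functor: every right $A$-module $N$ is, by the restriction--induction adjunction, a quotient of $F_A(U(N)) = U(N)\otimes A$ via its action map, hence a direct summand by semisimplicity, so $F_A$ is surjective. For the reverse passage I would use the equivalence \eqref{equiv}, which, because $\C$ is non-degenerate, realizes $\C$ as a fusion subcategory of $\Z(\C_A)$ through $\C \cong \C \boxtimes \vect \hookrightarrow \C \boxtimes (\C_A^0)^{\rev} \cong \Z(\C_A)$.

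For the implication ``$\C$ has the property $\Rightarrow$ $\C_A$ has it'', I would argue along $F_A$. If $\C$ is weakly integral (respectively, integral), then so is its image $\C_A$ under the dominant tensor functor $F_A$, by the result recalled in Section \ref{tens-c}. If $\C$ is weakly group-theoretical, solvable, or group-theoretical, then $\C_A$, being a quotient category of $\C$ via $F_A$, inherits the same property from the stability of these classes under components of quotient categories (\cite[Proposition 4.1]{ENO2}, together with the analogous stability properties of group-theoretical fusion categories).

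For the converse, I would argue along the inclusion $\C \hookrightarrow \Z(\C_A)$. If $\C_A$ is weakly integral, then $\FPdim \Z(\C_A) = (\FPdim \C_A)^2 \in \mathbb Z$, so $\Z(\C_A)$ is weakly integral and hence so is its fusion subcategory $\C$; if $\C_A$ is integral, then $\Z(\C_A)$ is integral (the center of an integral fusion category is integral, see \cite{ENO}), and again $\C$ is integral as a fusion subcategory. If $\C_A$ is weakly group-theoretical, solvable, or group-theoretical, then $\Z(\C_A)$ has the same property by stability under the Drinfeld center, and therefore so does the fusion subcategory $\C$ by stability under fusion subcategories.

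The technical heart of the argument is the equivalence \eqref{equiv} together with the dominance of $F_A$; once these are in place the statement follows by bookkeeping with the stability properties listed in Section \ref{tens-c}. The point requiring the most care is the forward direction, where one must correctly identify $\C_A$ as (a component of) a quotient category of $\C$ so that the stability results apply, and, in the integral case of the converse, invoke that the center of an integral fusion category is again integral. The group-theoretical case is handled exactly as the weakly group-theoretical one, using the corresponding well-known stability properties of group-theoretical fusion categories.
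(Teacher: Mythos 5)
Your proposal is correct and follows essentially the same route as the paper: the forward direction via the dominant tensor functor $F_A:\C\to\C_A$ together with the stability of each class under quotient categories, and the converse via the embedding $\C\cong\C\boxtimes\vect\hookrightarrow\Z(\C_A)$ furnished by \eqref{equiv}, combined with stability under Drinfeld centers and fusion subcategories. The extra justifications you supply (dominance of $F_A$, integrality of the center of an integral category) are details the paper leaves implicit, not deviations in method.
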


\begin{proof} Observe that there is a dominant tensor functor $F:\C \to \C_A$.
This implies the 'only if' direction. Suppose now that $\C_A$ is in one of the
prescribed classes, that is, it is weakly integral, integral, weakly
group-theoretical, solvable or group-theoretical. Then the center of $\C_A$ is
in the same class and, because by \eqref{equiv}, $\C$ is equivalent to a fusion
subcategory of $\Z(\C_A)$, then $\C$ is in that class as well. \end{proof}

For  a non-degenerate fusion category $\C$, we shall denote by $[\C]$ its
equivalence class in the Witt group. Recall from \cite[Corollary 5.9]{witt-nondeg} that two
non-degenerate braided fusion categories $\C_1$ and $\C_2$ are Witt-equivalent
if and only if there exists a fusion category $\D$ such that $\Z(\D) \cong \C_1
\boxtimes \C_2^{\rev}$ as braided fusion categories.

In view of the equivalence \eqref{equiv}, if $A \in \C$ is a connected \' etale
algebra, then $[\C] = [\C_A^0]$.

\section{Braided fusion categories and braided $G$-crossed fusion
categories}\label{g-crossed}

Let $G$ be a finite group. Recall that a \emph{braided $G$-crossed fusion
category}
\cite{turaev, turaev2} is a fusion category $\A$ endowed with a $G$-grading $\A
= \oplus_{g \in G}\A_g$ and an action of $G$ by tensor autoequivalences
$\rho:\underline G \to \underline \Aut_{\otimes} \, \A$, such that $\rho^g(\A_h) \subseteq
\A_{ghg^{-1}}$, for all $g, h \in G$, and a $G$-braiding $c: X \otimes Y \to
\rho^g(Y) \otimes X$, $g \in G$, $X \in \A_g$, $Y \in \A$, subject to compatibility conditions.

\medbreak A Tannakian subcategory $\E$ of a braided fusion category $\C$ gives
rise to a
connected \' etale algebra $A$ in $\C$. If $G$ is a finite group such that $\E
\cong \Rep G$ as symmetric categories, then $A$ is the algebra of functions on
$G$ with the regular action of $G$.

The fusion category $\C_A$ is in this case the de-equivariantization $\C_G$ of
$\C$ with respect to $\Rep G$, and it is a braided $G$-crossed fusion category.
 
The braided fusion category $\C_A^0$ is the neutral
component of $\C_G$ with respect to the associated $G$-grading.

\medbreak Conversely, let $\A$ be a $G$-crossed braided fusion category. Then
the
equivariantization $\A^G$ under the action of $G$ is a braided fusion category.
The canonical embedding $\Rep G \to \A^G$ of fusion categories is fact an
embedding of braided fusion categories. Hence $\A^G$ contains $\E \cong \Rep G$
as a Tannakian subcategory. 

The $G$-braiding on $\A$ restricts to a braiding in the neutral component
$\A_e$ of the $G$-grading. Furthermore, the group $G$ acts by restriction on
$\A_e$ and this action is by braided tensor autoequivalences. This makes the
equivariantization $\A_e^G$ into a braided fusion subcategory of $\A^G$. This
fusion subcategory coincides with the centralizer $\Z_2(\E, \A^G)$ of the
Tannakian subcategory $\E$ in $\A^G$. See  \cite{mueger-crossed}. 

\medbreak In this way, equivariantization defines a bijective correspondence
between
equivalence classes of braided fusion categories containing $\Rep G$ as a
Tannakian subcategory and $G$-crossed braided fusion categories \cite{kirillov},
\cite{mueger-crossed}, \cite[Section 4.4]{DGNOI}.
The braided fusion category $\A^G$ is non-degenerate if and only if
the neutral component $\A_e$ is non-degenerate and the $G$-grading of $\A$ is
faithful \cite[Proposition 4.6 (ii)]{DGNOI}.

In particular, if $\C$ is a non-degenerate braided fusion category
containing a Tannakian subcategory $\E \cong \Rep G$, then $|G|^2$ divides
$\FPdim \C$.

\medbreak Let $\A$ be a $G$-crossed braided fusion category such that the
neutral component $\A_e$ is non-degenerate and the $G$-grading of $\A$ is
faithful.  As a consequence of \eqref{equiv}, we have $\A_e^{\rev} = \Z_2(\A^G,
\Z(\A))$ and there is an equivalence
of braided fusion categories 
\begin{equation}\label{center} \Z(\A) \simeq \A^G \boxtimes \A_e^{\rev}.
\end{equation}

In this context we have the following refinement of Proposition
\ref{cor-etale}: 

\begin{proposition}\label{tann-inher} Let $\C$ be a braided fusion category.
Suppose
that $\E \cong \Rep G \subseteq  \C$ is a Tannakian subcategory. Then $\C$ is
weakly integral 
(respectively, integral or weakly group-theoretical) if and only if
$\C^0_G$ is weakly integral  (respectively,
integral, weakly group-theoretical). In addition,  $\C$ is solvable if and only
if $\C^0_G$ is solvable and $G$ is solvable.
\end{proposition}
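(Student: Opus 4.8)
The plan is to reduce this statement to the already-established Proposition \ref{cor-etale} together with the structural results on $G$-crossed braided fusion categories recalled in this section. First I would observe that the Tannakian subcategory $\E \cong \Rep G \subseteq \C$ gives rise to a connected \'etale algebra $A \in \C$, namely the algebra of functions on $G$ with its regular action, and that by construction $\C_A = \C_G$ is the de-equivariantization, which is a braided $G$-crossed fusion category with neutral component $\C_A^0 = (\C_G)_e$. By Proposition \ref{cor-etale}, the category $\C$ lies in any one of the prescribed classes (weakly integral, integral, weakly group-theoretical) if and only if $\C_A = \C_G$ does. So the task becomes relating the properties of the full de-equivariantization $\C_G$ to those of its neutral component $\C_G^0 = (\C_G)_e$.

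For the weakly integral, integral, and weakly group-theoretical cases, the key point is that $\C_G$ is a $G$-extension of its neutral component $\C_G^0$ via the $G$-grading. The \textquoteleft if\textquoteright{} direction then follows because the class of weakly group-theoretical fusion categories is stable under $G$-extensions by \cite[Proposition 4.1]{ENO2}; likewise one checks directly that a $G$-extension of a weakly integral (respectively integral) category is again weakly integral (respectively integral), since $\FPdim \C_G = |G|\,\FPdim \C_G^0$ and the simple objects of $\C_G$ in each graded component have Frobenius-Perron dimensions governed by those of the neutral component. The \textquoteleft only if\textquoteright{} direction is immediate: $\C_G^0$ is a fusion subcategory of $\C_G$, and each of these classes is closed under passage to fusion subcategories.

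The solvability statement is the delicate part and the main obstacle, because solvability is \emph{not} stable under extensions by arbitrary finite groups but only by solvable ones. Here I would use that $\C_G$ is a $G$-extension of $\C_G^0$, so that if both $\C_G^0$ is solvable and $G$ is solvable, then $\C_G$ is solvable by the stability of the class of solvable categories under $G$-extensions by solvable groups \cite[Proposition 4.1]{ENO2}; applying Proposition \ref{cor-etale} then yields that $\C$ is solvable. For the converse, suppose $\C$, hence $\C_G$, is solvable. Since $\C_G^0$ is a fusion subcategory of $\C_G$ and solvability is inherited by fusion subcategories, $\C_G^0$ is solvable. It remains to extract the solvability of $G$ itself; for this I would invoke that $\Rep G \cong \E$ embeds as a Tannakian subcategory of $\C$ with $\C$ solvable, and that a Tannakian category $\Rep G$ is solvable as a fusion category precisely when the group $G$ is solvable. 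Thus the subtlety is entirely in tracking the group-theoretic solvability of $G$ alongside the categorical solvability, and reconciling the two directions through the extension/subcategory behaviour of the solvable class.
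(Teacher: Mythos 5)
Your proposal is correct and follows essentially the same route as the paper's proof: reduce to Proposition \ref{cor-etale} via the \'etale algebra $A$ of functions on $G$, pass between $\C_G$ and its neutral component $\C_G^0$ through the extension structure of the associated grading, and treat solvability separately, obtaining solvability of $G$ from the embedding $\Rep G \subseteq \C$ and using stability of solvability under extensions by solvable groups. The only (harmless) deviations are that the paper notes the grading on $\C_G$ need not be faithful, so $\C_G$ is an $H$-extension of $\C_G^0$ for a subgroup $H \subseteq G$ (still solvable whenever $G$ is), and that for the converse solvability implication the paper uses that $\C \cong (\C_G)^G$ is an equivariantization by the solvable group $G$ rather than invoking Proposition \ref{cor-etale} a second time.
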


\begin{proof} The statement concerning weakly integral, integral and weakly
group-theoretical fusion categories follows from Proposition \ref{cor-etale},
since the de-equi\-variantization $\C_G$ is an $H$-extension of $\C_G^0$, for a (normal)
subgroup $H$ of $G$. Suppose that $\C$  is solvable. Then the quotient category
$\C_G$ and hence its fusion subcategory $\C_G^0$ are solvable as well. Moreover,
since $\C \cong (\C_G)^G$ is a $G$-equivariantization, then the category $\Rep
G$ is equivalent to a fusion subcategory of $\C$ and it is thus solvable. Hence
the group $G$ is solvable.  If, on the other hand, $\C^0_G$ and $G$ are
solvable, then $\C_G$ is solvable because it is an $H$-extension of $\C_G^0$, for some subgroup $H \subseteq G$.
Hence so is $\C \cong (\C_G)^G$.  \end{proof}

Note that $\C$ is obtained from $\C^0_G$ by an $H$-extension, where $H \subseteq
G$ is a subgroup of $G$ (the support of $\C_G$) followed by a
$G$-equivariantization.
Since the class of group-theoretical fusion categories is not stable under the
operation of taking extensions, then the property of being group-theoretical is
not inherited in general from
$\C^0_G$. 

\section{Solvable braided fusion categories}\label{solvable}

Recall that a fusion category $\C$ is called group-theoretical  if $\C$ is
Morita equivalent to a pointed fusion category. 

\medbreak If $\C$ is a braided fusion category, it is shown in \cite[Theorem
7.2]{NNW}
that $\C$ is group-theoretical if and only if $\C$ contains a Tannakian
subcategory $\E \cong \Rep G$ such that the de-equivariantization $\C_G$ is a
pointed fusion category.
This immediately implies the following: 

\begin{lemma}\label{gt} Let $\C$ be a group-theoretical braided fusion category.
Then either $\C$ is pointed or it contains a nontrivial Tannakian subcategory. 
\end{lemma}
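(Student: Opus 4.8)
The plan is to invoke the characterization of group-theoretical braided fusion categories from \cite[Theorem 7.2]{NNW} stated just above, and then analyze the de-equivariantization that it provides. By that result, since $\C$ is group-theoretical and braided, there exists a finite group $G$ and a Tannakian subcategory $\E \cong \Rep G \subseteq \C$ such that the de-equivariantization $\C_G$ is a pointed fusion category. The entire argument hinges on a dichotomy according to whether the group $G$ is trivial or not.

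First I would treat the case $G \neq \{e\}$. In this situation $\E \cong \Rep G$ is a \emph{nontrivial} Tannakian subcategory of $\C$, since $\FPdim \E = |G| > 1$, and we are immediately in the second alternative of the conclusion. So it remains only to handle the case $G = \{e\}$. Here $\Rep G \cong \vect$ is the trivial subcategory, and the de-equivariantization $\C_G$ is just $\C$ itself (de-equivariantizing with respect to the trivial group does nothing). Since $\C_G$ is assumed pointed, we conclude that $\C$ itself is pointed, placing us in the first alternative. Thus in either case the required dichotomy holds.

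The only point requiring a little care is the claim that trivial-group de-equivariantization returns $\C$ unchanged, so that $\C_G$ pointed forces $\C$ pointed when $G = \{e\}$. This is immediate from the construction recalled in Section \ref{g-crossed}: when $\E \cong \Rep G$ with $G$ trivial, the associated connected \'etale algebra $A$ is the algebra of functions on $G$, which is just the unit object $\1$, and $\C_A = \C_\1 \cong \C$. I do not expect any genuine obstacle here; the lemma is essentially a direct unpacking of the cited theorem, with the content reduced to observing that the only way to avoid containing a \emph{nontrivial} Tannakian subcategory is for the group $G$ witnessing group-theoreticity to be trivial, which in turn forces $\C$ to be pointed.
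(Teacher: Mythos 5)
Your proposal is correct and is exactly the paper's argument: the paper derives the lemma immediately from the characterization in \cite[Theorem 7.2]{NNW}, with the implicit dichotomy on whether the group $G$ witnessing group-theoreticity is trivial (giving $\C \cong \C_G$ pointed) or nontrivial (giving a nontrivial Tannakian subcategory $\Rep G$). Your write-up simply makes explicit the step the paper leaves as ``immediate,'' including the correct observation that de-equivariantization by the trivial group, i.e.\ taking modules over the \'etale algebra $A = \1$, returns $\C$ itself.
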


\begin{proposition}\label{solv-tann} Let $\C$ be a braided solvable fusion
category. Assume in addition that $\C$ is integral. Then either $\C$ is pointed
or it contains a nontrivial Tannakian subcategory. 
\end{proposition}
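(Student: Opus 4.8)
The plan is to argue by induction on $\FPdim \C$, the base case $\C = \vect$ being (trivially) pointed. So I assume $\C$ is not pointed and must produce a nontrivial Tannakian subcategory. Since $\C$ is solvable and $\C \neq \vect$, it contains a nontrivial invertible object \cite{ENO2}, and hence the maximal pointed subcategory $\C_{pt}$ is nontrivial. The idea is to extract a Tannakian subcategory from the braiding on $\C_{pt}$, and when this fails, to combine M\"uger's centralizer theorem with the inductive hypothesis. The central object of study will be the symmetric fusion subcategory $\Ll := \Z_2(\C_{pt})$, the M\"uger center of the pointed braided category $\C_{pt}$ (equivalently, the subcategory of those invertibles whose associated bicharacter is trivial).

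First I would dispose of the cases where $\Ll$ is large. Being symmetric, $\Ll$ is super-Tannakian by Deligne's theorem \cite{deligne}. As recalled in the preliminaries, if $\FPdim \Ll > 2$, or if $\Ll \cong \Rep \mathbb{Z}_2$, then $\Ll$ contains a nontrivial Tannakian subcategory, which is then a nontrivial Tannakian subcategory of $\C$, and we are done. The only remaining possibilities are $\Ll \cong \vect$ and $\Ll \cong \svect$.

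Next I would treat the nondegenerate case $\Ll \cong \vect$, that is, $\C_{pt}$ nondegenerate. By M\"uger's theorem $\C \cong \C_{pt} \boxtimes \C_{pt}'$, where $\C_{pt}' = \Z_2(\C_{pt}, \C)$. Since every invertible object of $\C$ lies in $\C_{pt}$, any invertible object of $\C_{pt}'$ lies in $\C_{pt} \cap \C_{pt}' = \Z_2(\C_{pt}) \cong \vect$, so $\C_{pt}'$ has no nontrivial invertible object. On the other hand $\C_{pt}'$ is a fusion subcategory of $\C$, hence solvable and integral, and it is nontrivial because $\C$ is not pointed. This contradicts the fact that a nontrivial solvable fusion category has a nontrivial invertible object \cite{ENO2}; so this case does not occur.

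There remains the case $\Ll \cong \svect$, which is the main obstacle, since $\svect$ is precisely the symmetric category carrying no nontrivial Tannakian subcategory, so the dichotomy above produces nothing. Write $\svect = \langle \delta \rangle$ with $\delta$ the fermion. I would first reduce to the case in which the group $A$ of invertible objects of $\C$ is a $2$-group: the bicharacter attached to $\C_{pt}$ is block-diagonal with respect to the primary decomposition of $A$, and since its radical is the $2$-group $\langle\delta\rangle$, each odd primary component is a nontrivial \emph{nondegenerate} pointed subcategory $\B$; splitting it off as $\C \cong \B \boxtimes \C'$ by M\"uger's theorem and applying the inductive hypothesis to the smaller, necessarily non-pointed category $\C'$ yields a nontrivial Tannakian subcategory. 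Thus I may assume $A$ is a $2$-group. Finally I would analyze the fermion through its centralizer $\delta'$: if $\delta$ is not central, then $\delta$ induces a faithful $\mathbb{Z}_2$-grading with neutral component $\delta'$, and induction applied to $\delta'$ either produces a Tannakian subcategory directly or forces $\delta' = \C_{pt}$, so that the self-pairing $X \otimes X^*$ of a non-invertible simple $X$ lands in the pointed neutral component and $(\FPdim X)^2 = |G(X)|$ for the stabilizer $G(X) = \{g : g \otimes X \cong X\}$; if $\delta$ is central, then $\C$ is slightly degenerate with $\Z_2(\C) \cong \svect$. In either of these tightly constrained configurations one must combine the pointed structure of $X \otimes X^*$, the quadratic form on $A$, and solvability (a nontrivial invertible in every fusion subcategory) to produce a boson, i.e. a nontrivial Tannakian object. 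Carrying out this last analysis — ruling out the slightly degenerate $2$-group configurations that a priori admit neither a Tannakian nor a nondegenerate pointed subcategory — is where the real work lies.
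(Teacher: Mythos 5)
Your reductions are correct as far as they go: the case $\Z_2(\C_{pt})\cong\vect$ is handled by a genuinely nice argument (by \cite{ENO2} a nontrivial solvable fusion category contains a nontrivial invertible object, so the factor $\Z_2(\C_{pt},\C)$ in M\"uger's decomposition $\C\cong\C_{pt}\boxtimes\Z_2(\C_{pt},\C)$ cannot be both nontrivial and free of nontrivial invertibles), and splitting off the odd primary part of $G(\C)$ as a non-degenerate pointed factor is also fine. But the proof is not complete, and the gap is not a deferrable verification: the final configuration you leave open ($\Z_2(\C_{pt})\cong\svect$, $G(\C)$ a $2$-group, $\C$ either slightly degenerate or $\mathbb{Z}_2$-graded over $\C_{pt}$) is essentially the entire content of the proposition. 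The only trace of solvability you allow yourself is the existence of nontrivial invertibles in every nontrivial solvable fusion subcategory, and nothing in the remaining data (the quadratic form on $G(\C)$ together with $(\FPdim X)^2=|G[X]|$) produces the required ``boson''; indeed the paper's Theorem \ref{sol-non-deg}, which is exactly a classification of these residual configurations, is itself \emph{deduced} from Proposition \ref{solv-tann}, so your plan stalls precisely where the theorem's strength is needed.

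The missing idea is to use the defining tower of solvability rather than only its consequences, and this is what the paper's (much shorter) proof does: write $\C$ as a $G$-equivariantization or a $G$-extension of a fusion category $\D$, with $G$ of prime order. In the equivariantization case one is done immediately, since $\Rep G$ embeds in $\C$ as a Tannakian subcategory via the central exact sequence $\Rep G\to\C\to\D$ of \cite{indp-exact}. In the extension case $\D$ is braided, integral and solvable, so by induction it is pointed or contains a nontrivial Tannakian subcategory; in the pointed case $\C$ is nilpotent, hence group-theoretical by \cite[Theorem 6.10]{DGNO-gt} (integral nilpotent braided fusion categories are group-theoretical), and then Lemma \ref{gt} gives the dichotomy. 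Note that these same two citations would close your subcase where the fermion $\delta$ is non-central and $\delta'=\C_{pt}$, since there $\C$ is a $\mathbb{Z}_2$-extension of a pointed category and hence nilpotent; but your subcase where $\delta$ is central (so $\C$ is slightly degenerate) admits no such shortcut, and I do not see how to finish it within your framework without falling back on the tower, i.e.\ on the paper's argument.
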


\begin{proof} Since $\C$ is solvable, there exist a group $G$ of prime order and
a fusion category $\D$ such that $\C$ is equivalent as a fusion category to a
$G$-equivariantization or to a $G$-extension of $\D$. In particular, $\D$ is
integral and solvable and $\FPdim \D = \FPdim \C / |G| < \FPdim \C$.

If $\C$ is a $G$-equivariantization of $\D$, then there is a central exact
sequence of
tensor functors $\Rep G \to \C \to \D$ and $\Rep G$ is a
Tannakian subcategory of $\C$. See \cite[Example 2.5 and Proposition
2.6]{indp-exact}.

Suppose next that $\C$ is a $G$-extension of $\D$, then $\D$ is a braided fusion
category and we may assume inductively that $\D$ contains a Tannakian
subcategory, whence so does $\C$, or $\D$ is pointed.  The last possibility
implies that $\C$ is nilpotent. By \cite[Theorem 6.10]{DGNO-gt}, an integral
nilpotent braided fusion category is group-theoretical. Then Lemma \ref{gt}
implies that $\C$ has one of the required properties. \end{proof}

\begin{theorem}\label{sol-non-deg} Let $\C$ be a solvable non-degenerate braided
 fusion category. Then one of the following holds:
\begin{enumerate}\item[(i)] $\C$ contains a nontrivial Tannakian subcategory, or
\item[(ii)] $\C \cong \mathcal B \boxtimes \I_1 \boxtimes \dots \boxtimes \I_n$,
as braided fusion categories, where $\B$ is a pointed non-degenerate fusion
category and $\I_1, \dots, \I_n$ are Ising braided categories.  
\end{enumerate}
\end{theorem}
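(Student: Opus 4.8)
The plan is to argue by induction on $\FPdim \C$. If $\C$ is integral, then Proposition \ref{solv-tann} applies directly: either $\C$ is pointed, which is case (ii) with no Ising factors and $\B = \C$, or $\C$ contains a nontrivial Tannakian subcategory, which is case (i). So the whole difficulty lies in the non-integral case, where $\C$ is weakly integral (being solvable) but not integral. Here the strategy is to reduce to the following assertion: a non-degenerate, weakly integral, solvable braided fusion category that is not integral either contains a nontrivial Tannakian subcategory or contains an Ising subcategory. Granting this, the first alternative is case (i); in the second, since any Ising category $\I$ is non-degenerate, Müger's decomposition theorem gives $\C \cong \I \boxtimes \Z_2(\I, \C)$ with $\C' := \Z_2(\I, \C)$ non-degenerate of dimension $\FPdim \C / 4$. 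As a fusion subcategory of a solvable category, $\C'$ is again solvable, so the inductive hypothesis applies: if $\C'$ is in case (i) its nontrivial Tannakian subcategory sits inside $\C$ and we are in case (i), while if $\C' \cong \B \boxtimes \I_1 \boxtimes \cdots \boxtimes \I_{n-1}$ then $\C \cong \B \boxtimes \I_1 \boxtimes \cdots \boxtimes \I_{n-1} \boxtimes \I$ is as required in (ii).

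It remains to prove the reduction assertion, which is the heart of the matter. Assuming $\C$ has no nontrivial Tannakian subcategory, I must produce an Ising subcategory. First I would use that the simple objects of integer Frobenius-Perron dimension span a fusion subcategory $\C_{\mathrm{int}}$, realized as the trivial component of the canonical grading of the weakly integral category $\C$ by an elementary abelian $2$-group; in particular $\C_{pt} \subseteq \C_{\mathrm{int}}$ and $\C_{\ad} \subseteq \C_{\mathrm{int}}$. Since $\C_{\mathrm{int}}$ is integral, braided and solvable, Proposition \ref{solv-tann} forces it to be pointed — a nontrivial Tannakian subcategory of $\C_{\mathrm{int}}$ would be one of $\C$ — that is, $\C_{\mathrm{int}} = \C_{pt}$. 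Next, invoking the Gelaki--Nikshych identity $\C_{\ad} = (\C_{pt})'$ together with $\C_{\ad} \subseteq \C_{pt}$, I obtain $\Z_2(\C_{pt}) = \C_{pt} \cap (\C_{pt})' = \C_{\ad}$, so $\C_{\ad}$ is a symmetric fusion category. Because $\C$ contains no nontrivial Tannakian subcategory, $\C_{\ad}$ has Frobenius-Perron dimension at most $2$, and since $\C_{\ad} \cong \vect$ would make $\C$ pointed, hence integral, I conclude $\C_{\ad} \cong \svect$.

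Finally I would exploit that $\FPdim \C_{\ad} = 2$. In the universal grading $\C = \oplus_{g \in U(\C)} \C_g$ every homogeneous component has Frobenius-Perron dimension equal to $\FPdim \C_{\ad} = 2$. As $\C$ is not integral it has a non-invertible simple object $\sigma$ (all simple objects of $\C_{pt} = \C_{\mathrm{int}}$ being invertible), and $(\FPdim \sigma)^2 \geq 2$; since $\sigma$ lies in a single component of dimension $2$, this forces $\FPdim \sigma = \sqrt 2$ and $\sigma \otimes \sigma^* \cong \1 \oplus f$, where $f$ is the nontrivial object of $\C_{\ad} \cong \svect$. A short check then shows $f \otimes \sigma \cong \sigma$ and that $\langle \sigma \rangle$ has simple objects $\1, f, \sigma$, so $\FPdim \langle \sigma \rangle = 4$ and $\langle \sigma \rangle$ is non-pointed, i.e. an Ising braided category, as needed.

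The main obstacle is this reduction, and within it the step identifying $\C_{\ad}$ with $\svect$: this is precisely what rigidifies the dimensions of the simple objects and rules out the phenomenon — exhibited for instance by $\C(\mathfrak{sl}_2, 4)$ — of a solvable non-degenerate braided category with non-integral simple objects of dimension $\sqrt d$, $d > 2$; in all such examples a nontrivial Tannakian subcategory is present, which is exactly what the hypothesis excludes. The remaining points (that $\C_{\mathrm{int}}$ is a fusion subcategory, that the components of the universal grading are equidimensional, and the fusion rules of $\langle \sigma \rangle$) are standard.
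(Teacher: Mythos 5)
Your argument is correct and closely parallels the paper's proof up to and including the facts that $\C_{\mathrm{int}}=\C_{pt}$, that $\C_{\ad}=\Z_2(\C_{pt})\cong \svect$, that every non-invertible simple object $\sigma$ has $\FPdim\sigma=\sqrt 2$, that $\sigma\otimes\sigma^*\cong \1\oplus f$, and that $f\otimes\sigma\cong\sigma$. The gap is in the very last step: the claim that $\langle\sigma\rangle$ has simple objects $\1, f, \sigma$ is \emph{not} a short check, and is false for an arbitrary non-invertible simple $\sigma$. It requires $\sigma\cong\sigma^*$, i.e.\ that the degree of $\sigma$ in the universal grading have order $2$, and nothing you have established forces this. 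Concretely, let $\C=\I\boxtimes\C(\mathbb Z_3,q)$, where $\I$ is an Ising category with simple objects $\1,f,\sigma_0$ and $\C(\mathbb Z_3,q)$ denotes the pointed non-degenerate braided category attached to a non-degenerate quadratic form $q$ on $\mathbb Z_3$, with invertibles $\1,\omega,\omega^2$. This $\C$ satisfies every hypothesis of your reduction assertion: it is non-degenerate, weakly integral of dimension $12$, solvable, not integral, and its only nontrivial symmetric subcategory is $\svect$, so it has no nontrivial Tannakian subcategory; moreover $\C_{\mathrm{int}}=\C_{pt}\cong\svect\boxtimes\C(\mathbb Z_3,q)$ and $\C_{\ad}\cong\svect$, exactly as in your argument. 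Now take $\sigma=\sigma_0\boxtimes\omega$: then $\FPdim\sigma=\sqrt2$, $\sigma\otimes\sigma^*\cong\1\oplus f$ and $f\otimes\sigma\cong\sigma$, yet $\sigma^*=\sigma_0\boxtimes\omega^2\ncong\sigma$ and $\sigma\otimes\sigma\cong(\1\oplus f)\boxtimes\omega^2$, so $\langle\sigma\rangle=\C$, of dimension $12$, which is not Ising. So the existence of a self-dual $\sqrt2$-simple (equivalently, of an Ising subcategory) is precisely what remains to be proved at that point.

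The paper avoids this trap by reducing, at the start of the induction, to the case where $\C$ is \emph{prime} (contains no proper nontrivial non-degenerate subcategory), using M\"uger's decomposition $\C\cong\D\boxtimes\Z_2(\D,\C)$ and induction; primality kills examples like the one above. Under primality, since $\C_{pt}$ is slightly degenerate pointed, \cite[Proposition 2.6 (ii)]{ENO2} gives $\C_{pt}\cong\svect\boxtimes\B$ with $\B$ non-degenerate pointed, so $\B\cong\vect$ and $\C_{pt}=\C_{\ad}\cong\svect$; then \cite[Theorem 3.14]{DGNOI} yields $\FPdim\C=\FPdim\C_{\ad}\,\FPdim\C_{pt}=4$, so $\C$ itself is Ising --- no generating object $\sigma$ is ever needed. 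Your argument can be repaired in the same spirit without assuming primality: write $\C_{pt}\cong\svect\boxtimes\B$ as above, split off $\C\cong\B\boxtimes\C_1$ with $\C_1=\Z_2(\B,\C)$ non-degenerate, note that $(\C_1)_{pt}=(\C_1)_{\mathrm{int}}\cong\svect$, and apply your dimension count to $\C_1$: $\FPdim\C_1=\FPdim(\C_1)_{pt}\,\FPdim(\C_1)_{\ad}\le 4$, and $\C_1$ is not integral, so $\C_1$ is Ising. Either way, the missing ingredient is splitting off the non-degenerate part of $\C_{pt}$; without it, the step identifying an Ising subcategory fails.
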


\begin{proof} The proof is by induction on $\FPdim \C$ (note that, since $\C$ is
solvable, $\FPdim \C$ is a natural integer).
In view of Proposition \ref{solv-tann}, we may assume that $\C$ is not integral.
We may further assume that $\C$ is prime, that is,  $\C$ contains no proper
non-degenerate fusion subcategories other than $\vect$; otherwise, if $\D
\subseteq \C$ is a proper
non-degenerate fusion subcategory, then $\C \cong \D \boxtimes \Z_2(\D, \C)$
\cite[Theorem
3.13]{DGNOI},
and both $\D$ and $\Z_2(\D, \C)$ are solvable non-degenerate. By induction, $\D$
and $\Z_2(\D, \C)$ satisfy (i) or (ii), and
then so does $\C$.

The adjoint subcategory $\C_{ad}$ is a solvable braided fusion category and it
is in addition integral, by \cite[Proposition 8.27]{ENO}. If $\C_{ad} = \vect$,
then $\C$ is pointed and we are done.
We may assume that 
$\C_{ad} \ncong \vect$ and contains no nontrivial Tannakian subcategories
(otherwise $\C$ satisfies (i)).   By Proposition \ref{solv-tann}, we get that
$\C_{ad}$ is pointed, and therefore $\Z_2(\C_{ad})
\cong \svect$. Indeed, $\Z_2(\C_{ad})$ is pointed and symmetric, therefore it is
super-Tannakian, thus $\Z_2(\C_{ad})
\cong \svect$ in view of the assumption that $\C_{ad}$ contains no Tannakian
subcategories.

Hence
$\C_{ad}$ is slightly degenerate, and therefore $\C_{ad} \cong \svect \boxtimes
\C_0$, where $\C_0$ is a pointed non-degenerate braided fusion category
\cite[Proposition 2.6 (ii)]{ENO2}.
But $\C$ is prime, by assumption, and hence $\C_{ad} \cong \svect$. 

Since $\C$ is non-degenerate, then $\C_{ad} = \Z_2(\C_{pt}, \C)$ \cite[Corollary
3.27]{DGNOI}. Then we get $\C_{ad} = \Z_2(\C_{pt}, \C) \subseteq \C_{pt}$ and
thus
$\C_{ad} = \Z_2(\C_{pt}) \cong \svect$. Appealing again to \cite[Proposition 2.6
(ii)]{ENO2}, we obtain that $\C_{pt} = \C_{ad} = \svect$. Then, by \cite[Theorem
3.14]{DGNOI}, $\FPdim \C =
\FPdim \C_{ad} \FPdim \C_{pt} = 4$ and therefore $\C$ is an Ising braided
category. This finishes the proof of the theorem. 
\end{proof}

\section{The Witt class of a weakly group-theoretical non-degenerate braided
fusion category}\label{wgt}

Let $\W$ be the group of Witt classes of non-degenerate braided fusion
categories and let $\W_{pt}$ and $\W_{Ising}$ be the subgroups of Witt classes
of pointed non-degenerate fusion categories and Ising braided categories,
respectively. 

\medbreak If $\C$ is a non-degenerate braided fusion category, $\C$ is called
\emph{completely anisotropic} if the only connected \' etale algebra in $\C$ is
$A = \1$.
By \cite[Theorem 5.13]{witt-nondeg}, every non-degenerate braided fusion
category is Witt equivalent to a unique completely anisotropic non-degenerate
fusion category.

\begin{lemma}\label{wgt-ca} Let $\C$ be a weakly group-theoretical non-degenerate braided
fusion category. Suppose that $\C$ is completely anisotropic. Then $\C$ is
nilpotent.
\end{lemma}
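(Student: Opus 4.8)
The plan is to deduce the lemma from the solvable structure theorem, Theorem \ref{sol-non-deg}, by first establishing its weakly group-theoretical analogue and then transcribing the argument. The basic observation is that complete anisotropy rules out nontrivial Tannakian subcategories: by the discussion in Section \ref{g-crossed}, a Tannakian subcategory $\E \cong \Rep G$ gives rise to a connected \'etale algebra in $\C$ (the algebra of functions on $G$), and this algebra has $\FPdim = |G| > 1$ when $G \neq 1$, contradicting the hypothesis. Hence throughout I may use that $\C$ contains no nontrivial Tannakian subcategory.

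The key intermediate step is the weakly group-theoretical version of Proposition \ref{solv-tann}: \emph{an integral weakly group-theoretical braided fusion category is either pointed or contains a nontrivial Tannakian subcategory.} I would prove this by induction on $\FPdim$, following the proof of Proposition \ref{solv-tann} with ``prime cyclic group'' replaced by ``arbitrary finite group'' and ``solvable'' by ``weakly group-theoretical''. Using the characterization of weakly group-theoretical categories by iterated extensions and equivariantizations from \cite{ENO2}, one writes $\C \neq \vect$ as a $G$-equivariantization or a $G$-extension of a smaller weakly group-theoretical category $\D$ for some nontrivial finite group $G$. In the equivariantization case the central exact sequence $\Rep G \to \C \to \D$ produces the Tannakian subcategory $\Rep G \subseteq \C$; here, exactly as in Proposition \ref{solv-tann}, one invokes \cite{indp-exact} to ensure that the braiding of $\C$ restricts to the symmetric one on $\Rep G$. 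In the extension case $\D$ is the braided neutral component of a faithful grading, so by induction it is pointed or contains a nontrivial Tannakian subcategory, the latter then lying in $\C$; if $\D$ is pointed then $\C$ is nilpotent, hence group-theoretical by \cite[Theorem 6.10]{DGNO-gt}, and Lemma \ref{gt} concludes.

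With this analogue available, I would repeat the proof of Theorem \ref{sol-non-deg} with ``solvable'' replaced by ``weakly group-theoretical''. Inducting on $\FPdim \C$, I first reduce to $\C$ prime: a proper non-degenerate fusion subcategory $\D$ gives a splitting $\C \cong \D \boxtimes \Z_2(\D,\C)$ \cite[Theorem 3.13]{DGNOI} into two smaller non-degenerate factors, each weakly group-theoretical and---since a nontrivial connected \'etale algebra in a factor would yield one in $\C$ after tensoring with $\1$---completely anisotropic; by induction both are nilpotent, and, nilpotency being stable under Deligne products, so is $\C$. For prime integral $\C$, the analogue together with the absence of Tannakian subcategories forces $\C$ to be pointed, hence nilpotent. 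For prime non-integral $\C$, the subcategory $\C_{ad}$ is proper, integral \cite[Proposition 8.27]{ENO}, weakly group-theoretical and braided, with no Tannakian subcategory, so by the analogue it is pointed; transcribing the remaining lines of the proof of Theorem \ref{sol-non-deg} verbatim (via \cite[Proposition 2.6 (ii)]{ENO2}, \cite[Corollary 3.27]{DGNOI}, \cite[Theorem 3.14]{DGNOI} and primeness) yields $\FPdim \C = 4$, so $\C$ is an Ising braided category. As an Ising category is a $\mathbb Z_2$-extension of $\svect$ and pointed categories are nilpotent, $\C$ is nilpotent in every case.

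The main obstacle is the braided analogue of Proposition \ref{solv-tann}, and within it the point that the equivariantization steps coming from the weakly group-theoretical filtration really yield \emph{Tannakian} subcategories---that is, symmetric subcategories on which the ambient braiding restricts to the standard one---rather than merely copies of $\Rep G$ as fusion subcategories; this is precisely where the centrality statement of \cite{indp-exact} is needed. Once this is secured, the remainder is a faithful transcription of the solvable arguments.
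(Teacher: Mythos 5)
Your argument has a genuine gap at its foundation, and it occurs in the step you yourself single out as the main obstacle. The induction proving your key intermediate claim (the weakly group-theoretical analogue of Proposition \ref{solv-tann}) is launched by writing a weakly group-theoretical braided fusion category $\C \neq \vect$ as a $G$-equivariantization or a $G$-extension of a smaller weakly group-theoretical category, citing a ``characterization of weakly group-theoretical categories by iterated extensions and equivariantizations'' from \cite{ENO2}. No such characterization exists. In \cite{ENO2} (and in this paper's introduction) the description by iterated extensions and equivariantizations --- by cyclic groups of prime order --- is an equivalent characterization of \emph{solvable} categories only; for weakly group-theoretical categories one has just the definition (Morita equivalence to a nilpotent category) together with the one-directional closure properties of \cite[Proposition 4.1]{ENO2}: extensions and equivariantizations of weakly group-theoretical categories are again weakly group-theoretical, but nothing allows you to unwind a weakly group-theoretical category into such a tower. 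This is precisely why Proposition \ref{solv-tann} is stated and proved only in the solvable case, where that unwinding is available. Since your transcription of Theorem \ref{sol-non-deg} invokes the analogue both for $\C$ (integral case) and for $\C_{ad}$ (non-integral case), the whole proof collapses; and the analogue itself, while plausibly true, has no independent proof available.

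The paper's proof is entirely different and bypasses structure theory of this kind. Since $\C$ is non-degenerate, \cite[Corollary 3.8]{witt-braided} classifies indecomposable $\C$-module categories by triples $(A_1, A_2, \phi)$, where $A_1, A_2$ are connected \'etale algebras in $\C$ and $\phi : \C_{A_1}^0 \to (\C_{A_2}^0)^{\rev}$ is a braided equivalence; the invertible module categories are exactly those with $A_1 = A_2 = \1$. Complete anisotropy therefore forces the indecomposable module category $\M$ with $\C^*_{\M}$ nilpotent to be an invertible $\C$-bimodule category, whence the $\alpha$-induction functors $\alpha^{\pm} : \C \to \C^*_{\M}$ are tensor equivalences by \cite[Proposition 4.2]{ENO3}, and $\C \simeq \C^*_{\M}$ is itself nilpotent. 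In other words, the hypothesis of complete anisotropy is applied directly to the module category realizing the Morita equivalence, not through Tannakian subcategories or any pointed-plus-Ising decomposition (indeed, in the paper the logical flow is the reverse of yours: Lemma \ref{wgt-ca} is an ingredient in the proofs of Theorems \ref{wgt-witt}, which then rely on Theorem \ref{sol-non-deg}). To salvage your route you would first need a genuinely new proof of the weakly group-theoretical analogue of Proposition \ref{solv-tann}, which is not in the literature.
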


Note that,  since every braided nilpotent fusion category is
solvable \cite[Proposition 4.5 (iii)]{ENO2}, it follows that $\C$ is also
solvable.

\begin{proof} 
By \cite[Corollary 3.8]{witt-braided}, equivalence classes of indecomposable
module categories over $\C$ are parameterized by isomorphism classes of triples
$(A_1, A_2, \phi)$,  where $A_1, A_2$ are connected \' etale algebras in $\C$
and $\phi:\C_{A_1}^0 \to (\C_{A_2}^0)^{\rev}$ is a braided equivalence.
Furthermore, invertible module categories correspond to such triples where $A_1
= A_2 = \1$ (see Remark 3.9 \textit{loc. cit.}). 

The assumption that $\C$ is weakly group-theoretical means that there exists an
indecomposable module
category   $\M$ such that
$\C^*_{\M}$ is nilpotent. Since $\C$ is braided,  $\M$ is naturally a
$\C$-bimodule category \cite[Section 2.8]{DN}. 
Consider the $\alpha$-induction tensor functors \cite[Section 5.1]{ostrik}
\begin{equation}\alpha^{\pm}:\C \to \C^*_{\M},
\end{equation}
defined by $\alpha^{\pm}(X)(M) = X \otimes M$, $X \in \C$, $M \in \M$. Letting 
$a_{X, Y, M}: X \otimes (Y \otimes M) \to (X \otimes Y) \otimes M$, $X, Y \in
\C$, $M \in \M$, denote the
associativi\-ty isomorphisms for the  $\C$-action on $\M$, the 
module functor structures on $\alpha^{\pm}(X)$, $X \in \C$, are given,
respectively, by 
$$a_{Y, X, M}^{-1} (c_{X, Y} \otimes \id) a_{X, Y, M}: \alpha^+(X)(Y \otimes M)
\to Y \otimes \alpha^+(X)(M),$$ and 
$$a_{Y, X, M}^{-1} (c^{\rev}_{X, Y} \otimes \id) a_{X, Y, M}: \alpha^-(X)(Y
\otimes M)
\to Y \otimes \alpha^-(X)(M),$$ $Y \in \C$, $M \in \M$.

The assumption that $\C$ is completely anisotropic implies that the module
category $\M$ is an invertible $\C$-bimodule category. Therefore the functors
$\alpha^{\pm}$ are equivalences of fusion
categories \cite[Proposition
4.2]{ENO3}. Hence $\C$ is nilpotent, as claimed.  \end{proof}

\begin{theorem}\label{wgt-witt} Let $\C$ be a non-degenerate braided fusion
category. Suppose that $\C$ is weakly group-theoretical. Then $[\C] \in \langle
\W_{pt}, \W_{Ising}\rangle$. If in addition $\C$ is integral, then $[\C] \in
\W_{pt}$.
\end{theorem}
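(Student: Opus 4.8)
The plan is to prove Theorem \ref{wgt-witt} by combining the Witt-class reduction to a completely anisotropic representative with Lemma \ref{wgt-ca} and the structural Theorem \ref{sol-non-deg}. By \cite[Theorem 5.13]{witt-nondeg}, every non-degenerate braided fusion category $\C$ is Witt equivalent to a completely anisotropic one $\C_0$, and since Witt equivalence is detected by the relation $\Z(\D) \cong \C \boxtimes \C_0^{\rev}$, the hypothesis that $\C$ is weakly group-theoretical should transfer to $\C_0$. Concretely, I would use that the completely anisotropic representative is obtained as $\C_0 \cong \C_A^0$ for a suitable connected \'etale algebra $A \in \C$ (the maximal connected \'etale algebra making the category anisotropic), so that $[\C] = [\C_A^0] = [\C_0]$ by the remark following Proposition \ref{cor-etale}; Proposition \ref{cor-etale} then guarantees that $\C_0$ inherits weak group-theoreticality (and integrality) from $\C$.

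Once $\C_0$ is known to be weakly group-theoretical, completely anisotropic, and non-degenerate, Lemma \ref{wgt-ca} gives that $\C_0$ is nilpotent, hence solvable. Now I would invoke Theorem \ref{sol-non-deg}: since $\C_0$ is solvable and non-degenerate, it either contains a nontrivial Tannakian subcategory or decomposes as $\C_0 \cong \B \boxtimes \I_1 \boxtimes \cdots \boxtimes \I_n$ with $\B$ pointed non-degenerate and each $\I_j$ an Ising braided category. The key point is that a completely anisotropic category can contain \emph{no} nontrivial Tannakian subcategory: a Tannakian subcategory $\Rep G$ with $|G| > 1$ yields a nontrivial connected \'etale algebra (the function algebra on $G$), contradicting complete anisotropy. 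Thus alternative (i) is excluded and we land in alternative (ii).

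With the decomposition $\C_0 \cong \B \boxtimes \I_1 \boxtimes \cdots \boxtimes \I_n$ in hand, the Witt class is multiplicative under $\boxtimes$, so $[\C] = [\C_0] = [\B] + \sum_{j=1}^n [\I_j]$. By definition $[\B] \in \W_{pt}$ since $\B$ is pointed non-degenerate, and each $[\I_j] \in \W_{Ising}$ since $\I_j$ is Ising braided; hence $[\C] \in \langle \W_{pt}, \W_{Ising}\rangle$, proving the first assertion. For the integral case, if $\C$ is integral then so is its anisotropic representative $\C_0$ (again by Proposition \ref{cor-etale}); but an Ising braided category has a simple object of dimension $\sqrt 2$ and is therefore not integral, so no Ising factors can appear, forcing $n = 0$ and $\C_0 \cong \B$ pointed. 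Then $[\C] = [\B] \in \W_{pt}$, as required.

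The main obstacle I anticipate is the first step: carefully justifying that the completely anisotropic representative $\C_0$ genuinely inherits the property of being weakly group-theoretical, and identifying it as $\C_A^0$ for an \'etale algebra $A \in \C$ so that Proposition \ref{cor-etale} applies. One must ensure the existence of such an $A$ realizing the anisotropic reduction inside $\C$ itself (rather than only up to abstract Witt equivalence through some unrelated center), since Proposition \ref{cor-etale} and its corollary are phrased for algebras living in $\C$; this is exactly where \cite[Theorem 5.13]{witt-nondeg} and the equivalence $\C \boxtimes (\C_A^0)^{\rev} \cong \Z(\C_A)$ of \eqref{equiv} must be combined. The remaining steps are then essentially formal consequences of the already-established structure theorems.
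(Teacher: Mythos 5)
Your proposal is correct and follows essentially the same route as the paper: reduce to a completely anisotropic representative via connected \'etale algebras (using Proposition \ref{cor-etale} to transfer weak group-theoreticality and integrality), apply Lemma \ref{wgt-ca} to get solvability, rule out the Tannakian alternative of Theorem \ref{sol-non-deg} by anisotropy, and discard Ising factors in the integral case. The only organizational difference is that the paper runs an induction on $\FPdim \C$, passing to $\C_A^0$ for an arbitrary nontrivial connected \'etale algebra $A$ at each step, so it never needs the maximal-\'etale-algebra construction behind \cite[Theorem 5.13]{witt-nondeg} that you invoke to realize the anisotropic representative inside $\C$ in a single step; both versions are valid, and the point you flag as the main obstacle is indeed settled by that construction (or avoided entirely by the paper's induction).
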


\begin{proof} The proof is by induction on
$\FPdim \C$. We may assume that $\C$ is prime. If $A \in \C$ is a connected \'
etale algebra, then $\C$ is Witt equivalent to the non-degenerate fusion
category $\C_{A}^0$. Moreover, $\C_{A}^0$ is also weakly group-theoretical,
since it is a fusion subcategory of a quotient category of $\C$, and $\FPdim
\C_{A}^0 = \FPdim \C / (\FPdim A)^2$.  Hence we may assume that $\C$ is
completely anisotropic, otherwise the statement follows by induction. 
By Lemma \ref{wgt-ca}, we get that $\C$ is solvable. In particular, being
completely anisotropic, $\C$ contains no nontrivial Tannakian subcategory and it
follows from  Theorem \ref{sol-non-deg} that $\C
\cong \mathcal B \boxtimes \I_1 \boxtimes \dots \boxtimes \I_n$,
as braided fusion categories, where $\B$ is a pointed non-degenerate fusion
category and $\I_1, \dots, \I_n$ are Ising braided categories.   Hence $[\C] \in
\langle
\W_{pt}, \W_{Ising}\rangle$. 
Moreover, if $\C$ is integral, then so is $\C^0_A$, hence we may also assume
inductively that $\C$ is completely anisotropic.    
Then Theorem \ref{sol-non-deg} implies that $\C$ is indeed pointed in this case. 
This finishes the proof of the theorem.
\end{proof}

Let $\widetilde\W$ denote the subgroup generated by Witt equivalence classes of 
the
fusion categories $\C(\mathfrak g, k)$ of integrable highest weight modules of
level $l$ over the affinization of a simple finite-dimensional Lie algebra
$\mathfrak g$. 

By \cite[Remark 6.5]{witt-nondeg}, $\W_{pt} \subseteq \widetilde\W$. On the
other hand, for any Ising braided category $\I$, we have $[\I] =
[\C(\mathfrak{sl}(2),
2)]^m$, for a unique odd number $m$, $1\leq m \leq 15$ \cite[Section 6.4
(3)]{witt-nondeg}. Thus the subgroup generated by $\W_{pt}$ and
$\W_{Ising}$ is contained in $\widetilde\W$.
As a consequence of Theorem \ref{wgt-witt}, we get:

\begin{corollary} Let $\C$ be a non-degenerate braided fusion category. Suppose
that $\C$ is weakly group-theoretical. Then $[\C] \in \widetilde\W$.
\end{corollary}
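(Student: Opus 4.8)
The plan is to obtain this corollary as a direct consequence of Theorem \ref{wgt-witt}, reducing the claim to the single containment $\langle \W_{pt}, \W_{Ising}\rangle \subseteq \widetilde\W$. Indeed, Theorem \ref{wgt-witt} already tells us that whenever $\C$ is a weakly group-theoretical non-degenerate braided fusion category, its Witt class $[\C]$ lies in the subgroup $\langle \W_{pt}, \W_{Ising}\rangle$ generated by the classes of pointed non-degenerate and Ising braided categories. So it suffices to verify that this subgroup sits inside $\widetilde\W$.

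To establish the containment, I would treat the two generating families separately. First, $\W_{pt} \subseteq \widetilde\W$ is recorded in \cite[Remark 6.5]{witt-nondeg}, so every class of a pointed non-degenerate braided fusion category already lies in $\widetilde\W$. Second, for the Ising generators I would invoke the identity $[\I] = [\C(\mathfrak{sl}(2), 2)]^m$, valid for every Ising braided category $\I$ with a suitable odd $m$ \cite[Section 6.4 (3)]{witt-nondeg}; since $\C(\mathfrak{sl}(2), 2)$ is one of the affine Lie type categories generating $\widetilde\W$, this shows $\W_{Ising} \subseteq \widetilde\W$ as well.

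Finally, since $\widetilde\W$ is by definition a subgroup of $\W$, it contains any subgroup generated by subsets of it; in particular $\langle \W_{pt}, \W_{Ising}\rangle \subseteq \widetilde\W$. Combining this inclusion with Theorem \ref{wgt-witt} yields $[\C] \in \widetilde\W$, as required. I expect essentially no obstacle here: the entire substantive content has already been carried by Theorem \ref{wgt-witt}, and the present statement merely repackages its conclusion using the known positions of $\W_{pt}$ and $\W_{Ising}$ inside $\widetilde\W$.
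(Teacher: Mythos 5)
Your proposal is correct and matches the paper's argument exactly: the paper likewise derives the corollary from Theorem \ref{wgt-witt} together with the inclusions $\W_{pt} \subseteq \widetilde\W$ (via \cite[Remark 6.5]{witt-nondeg}) and $\W_{Ising} \subseteq \widetilde\W$ (via the identity $[\I] = [\C(\mathfrak{sl}(2),2)]^m$ from \cite[Section 6.4 (3)]{witt-nondeg}).
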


\begin{remark} Let $s\W$ denote the Witt group of slightly degenerate braided
fusion categories
introduced in \cite{witt-braided}.
Recall from \textit{loc. cit.} that there is a group homomorphism $S: \W \to
s\W$, defined by $S([\C]) = [\C \boxtimes \svect]$, whose kernel is the subgroup
of $\W$ generated by the Witt classes of Ising braided categories.
It follows from Theorem \ref{wgt-witt} that for every weakly group-theoretical
non-degenerate braided fusion
category $\C$, we have $S([\C]) \in s\W_{pt}$. 
\end{remark}

We also point out the following consequence of Theorem \ref{wgt-witt}:

\begin{corollary} Let $\C$ be an integral non-degenerate braided fusion
category. Suppose $\C$ is weakly group-theoretical (respectively, solvable).
Then there exist an integral
nilpotent (respectively, cyclically nilpotent) fusion
category $\D$ and a pointed non-degenerate completely anisotropic fusion
category $\B$ such that
$\Z(\D) \cong \C \boxtimes \B$ as braided fusion categories.
\end{corollary}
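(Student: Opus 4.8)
The plan is to combine Theorem~\ref{wgt-witt} with the equivalence~\eqref{equiv}. Since $\C$ is integral and weakly group-theoretical, Theorem~\ref{wgt-witt} gives $[\C]\in\W_{pt}$, and hence also $[\C^{\rev}]=[\C]^{-1}\in\W_{pt}$. Let $\B$ be the completely anisotropic representative of the Witt class $[\C^{\rev}]$, which exists and is unique by \cite[Theorem 5.13]{witt-nondeg}. Since $\B$ represents a class in $\W_{pt}$ and the completely anisotropic reduction of a pointed non-degenerate braided fusion category is again pointed (it amounts to passing to the anisotropic part of the associated metric group), $\B$ is a pointed non-degenerate completely anisotropic fusion category, and $\B^{\rev}$ is the completely anisotropic representative of $[\C]$. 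This reduction is realized by a connected \'etale algebra $A\in\C$ with $\C_A^0\cong\B^{\rev}$; then \eqref{equiv} yields $\Z(\C_A)\cong\C\boxtimes(\C_A^0)^{\rev}\cong\C\boxtimes\B$. Setting $\D=\C_A$, Proposition~\ref{cor-etale} shows that $\D$ is integral, so the whole content of the statement is to arrange that $\D$ can be chosen nilpotent (respectively, cyclically nilpotent).

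I would establish the nilpotency by induction on $\FPdim\C$, following the pattern of the proof of Theorem~\ref{wgt-witt}. In the base case $\C$ is completely anisotropic, so $A=\1$ and $\C_A=\C$. By Lemma~\ref{wgt-ca} the category $\C$ is nilpotent, hence solvable, and Theorem~\ref{sol-non-deg} then forces $\C$ to be pointed, since its integrality rules out Ising factors. A pointed fusion category has trivial adjoint subcategory, hence is nilpotent, and it is cyclically nilpotent when $\C$ is solvable because its group of invertible objects, being braided, is abelian. Here one takes $\D=\C$ and $\B=\C^{\rev}$, and the identity $\Z(\C)\cong\C\boxtimes\C^{\rev}$ gives the required form.

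For the inductive step one passes to $\C'=\C_A^0$ for a nontrivial connected \'etale algebra $A$; this is an integral, non-degenerate, weakly group-theoretical braided fusion category of strictly smaller Frobenius-Perron dimension and with the same completely anisotropic representative, so by induction there is an integral nilpotent $\D'$ with $\Z(\D')\cong\C'\boxtimes\B$. The crux is to reconstruct from $\D'$ a nilpotent $\D$ with $\Z(\D)\cong\C\boxtimes\B$. The guiding principle, most transparent when $A$ arises from a Tannakian subcategory $\Rep G$, is that the description of $\C$ as an equivariantization of the $G$-crossed braided category $\C_G$ with neutral component $\C'=\C_A^0$ (Section~\ref{g-crossed}) should be mirrored on the dual side by realizing $\D$ as a $G$-extension of $\D'$. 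I expect this to be the hard part: it requires transporting the $G$-crossed extension data of $\C'$ across the identification $\Z(\D')\cong\C'\boxtimes\B$ and checking the center formula~\eqref{center} for the resulting extension (the case of a general, non-Tannakian, connected \'etale algebra needing the full machinery of \cite{witt-nondeg}). The reward is that nilpotency is then automatic: if $\D$ is a $G$-extension of $\D'$ then $\D_{\ad}$ is contained in the neutral component $\D'$, so $\D_{\ad}$ is a fusion subcategory of the nilpotent category $\D'$ and is therefore nilpotent, whence $\D$ is nilpotent. In the solvable case $G$ is solvable and $\D'$ is cyclically nilpotent, and refining the $G$-grading along a composition series of $G$ with cyclic quotients shows that $\D$ is cyclically nilpotent as well.
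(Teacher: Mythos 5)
Your first paragraph is sound and matches the paper's opening move: Theorem \ref{wgt-witt} together with the unique completely anisotropic representative of the Witt class gives a fusion category $\D$ (your $\C_A$) and a pointed non-degenerate completely anisotropic category $\B$ with $\Z(\D) \cong \C \boxtimes \B$, and $\D$ is integral by Proposition \ref{cor-etale}. But from that point on the proposal has a genuine gap, and you flag it yourself: the inductive step --- reconstructing a nilpotent $\D$ from the nilpotent $\D'$ supplied by the inductive hypothesis by ``transporting the $G$-crossed extension data'' across $\Z(\D') \cong \C' \boxtimes \B$ --- is never carried out, only hoped for. Since, as you correctly observe, arranging nilpotency of $\D$ is the whole content of the statement, what you have written is not a proof. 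It is also far from clear that the transport can be made to work: $\D'$ is tied to $\C'$ only through its center, and no mechanism is given for lifting the grading/action data on $\C'$ to an extension of $\D'$ whose center is again $\C \boxtimes \B$; the non-Tannakian case is even murkier.

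The missing idea, which the paper uses and which makes your induction unnecessary, is Morita invariance of the Drinfeld center combined with the very definition of weak group-theoreticity. Given any $\D$ with $\Z(\D) \cong \C \boxtimes \B$ (your $\D = \C_A$ works), note that $\Z(\D)$ is integral and weakly group-theoretical, since $\C$ is and $\B$ is pointed, using the stability of these classes under tensor products \cite[Proposition 4.1]{ENO2}; hence $\D$ itself is integral and weakly group-theoretical, being a quotient category of $\Z(\D)$ under the surjective forgetful functor. By definition there is then an indecomposable module category $\M$ such that $\D^*_{\M}$ is nilpotent, and $\Z(\D^*_{\M}) \cong \Z(\D) \cong \C \boxtimes \B$ as braided categories; $\D^*_{\M}$ is integral because its center is and the forgetful functor is dominant. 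So one simply replaces $\D$ by $\D^*_{\M}$. In the solvable case the same stability properties show $\Z(\D)$, hence $\D$, is solvable, so $\M$ can be chosen with $\D^*_{\M}$ cyclically nilpotent. Your base case (complete anisotropy forces $\C$ pointed via Lemma \ref{wgt-ca} and Theorem \ref{sol-non-deg}) is correct but is subsumed by this one-step argument.
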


\begin{proof} By Theorem \ref{wgt-witt}, there exist fusion category $\D$
and a pointed non-degenerate fusion category $\B$ such
that $\Z(\D) \cong \C \boxtimes \B$ as braided fusion categories. 
Moreover, since every Witt class has a unique representative which is completely
anisotropic, we may assume that $\B$ is completely anisotropic.

This implies that $\Z(\D)$ is
integral and weakly group-theoretical, and therefore so is $\D$. Hence there
exists an indecomposable module category $\M$ such that $\D^*_{\M}$ is
nilpotent. Furthermore, if $\C$ is solvable, then so is $\Z(\D)$, and therefore
there exists $\M$ such that $\D^*_{\M}$ is cyclically
nilpotent. This implies the corollary, since $\Z(\D) \cong \Z(\D^*_{\M})$ as
braided tensor categories.
\end{proof}

\section{Sufficient conditions for a non-degenerate braided fusion category to
be weakly group-theoretical}\label{suff-cond}

Let $\C$ be a fusion category. Let $\Irr(\C)$ be the set of isomorphism classes
of simple objects of $\C$ and let $G(\C)$ be the group of isomorphism classes of
invertible objects. 

The group $G(\C)$ acts on
the set of isomorphism classes of simple objects by tensor multiplication. For a
simple object 
$X \in \C$ let $G[X]$ denote the stabilizer of $X$ under this action. Thus
$G[X]$ is a subgroup of $G(\C)$ of order dividing $(\FPdim X)^2$. Moreover, for
every $X \in \Irr(\C)$, we have an
isomorphism
\begin{equation}\label{xx*}X \otimes X^* \cong \bigoplus_{g \in G[X]}g \oplus
\bigoplus_{\substack{Y \in \Irr(\C)\\ \FPdim Y > 1}} \Hom_{\C}(Y, X \otimes X^*)
\otimes Y.\end{equation}

\begin{lemma}\label{sl-deg} Let $\C$ be a braided fusion category. Suppose that
$\C$ contains
no nontrivial non-degenerate or Tannakian fusion subcategories. Then $\C$ is
slightly degenerate and the following hold:
\begin{enumerate}
 \item[(i)] $\C_{pt} = \Z_2(\C) \cong \svect$;
\item[(ii)] $G[X] = \1$, for all simple object  $X \in \C$. 
\end{enumerate}
\end{lemma}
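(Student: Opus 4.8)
The plan is to first pin down the M\"uger center, then deduce (i), and finally derive (ii) by excluding fermion-fixed simple objects; I assume throughout that $\C \neq \vect$, since for $\C = \vect$ the asserted equality $\C_{pt} \cong \svect$ fails. First I would analyze $\Z_2(\C)$. As it is a symmetric fusion subcategory, Deligne's theorem \cite{deligne} applies: if $\FPdim \Z_2(\C) > 2$ it contains a nontrivial Tannakian subcategory, while if $\FPdim \Z_2(\C) = 2$ it is either $\Rep \mathbb{Z}_2$ (Tannakian) or $\svect$. Since $\C$ has no nontrivial Tannakian subcategory, this leaves $\Z_2(\C) \cong \vect$ or $\Z_2(\C) \cong \svect$. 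In the first case $\C$ would be non-degenerate, hence a nontrivial non-degenerate subcategory of itself, contradicting the hypothesis; therefore $\Z_2(\C) \cong \svect$ and $\C$ is slightly degenerate. Write $\svect = \langle \delta \rangle$ with $\delta$ the transparent fermion.

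For (i), I would note that $\Z_2(\C)$ is pointed, so $\Z_2(\C) \subseteq \C_{pt}$, and establish the reverse inclusion by showing $\C_{pt}$ is slightly degenerate. Indeed, $\Z_2(\C_{pt})$ is symmetric and pointed; it contains $\Z_2(\C) \cong \svect$, because every object of $\Z_2(\C)$ lies in $\C_{pt}$ and centralizes all of $\C \supseteq \C_{pt}$. Running the Deligne argument once more (a pointed symmetric category of dimension $>2$, or $\Rep \mathbb{Z}_2$, yields a nontrivial Tannakian subcategory) forces $\Z_2(\C_{pt}) \cong \svect$. Then \cite[Proposition 2.6 (ii)]{ENO2}, applied to the pointed slightly degenerate category $\C_{pt}$, gives $\C_{pt} \cong \svect \boxtimes \mathcal{P}$ with $\mathcal{P}$ pointed and non-degenerate. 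Since $\mathcal{P}$ embeds as a non-degenerate fusion subcategory of $\C$, the hypothesis forces $\mathcal{P} = \vect$, whence $\C_{pt} = \svect = \Z_2(\C)$. This is parallel to the corresponding step in the proof of Theorem \ref{sol-non-deg}.

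For (ii), part (i) gives $G(\C) = \{\1, \delta\} \cong \mathbb{Z}_2$, so $G[X] \subseteq \{\1,\delta\}$ and it suffices to rule out $\delta \otimes X \cong X$ for a (necessarily non-invertible) simple object $X$. Suppose such an $X$ exists; by \eqref{xx*}, $\1 \oplus \delta$ is then a subobject of $X \otimes X^*$. The clean core is the case $\FPdim X = \sqrt 2$: here $X \otimes X^* = \1 \oplus \delta$ for dimension reasons, so the fusion subcategory generated by $X$ has simple objects $\1, \delta, X$ and Frobenius--Perron dimension $4$, i.e.\ it carries the fusion rules of an Ising category with $X$ non-invertible. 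But every Ising braided category is non-degenerate, so this would be a nontrivial non-degenerate fusion subcategory of $\C$, contradicting the hypothesis. Equivalently, the self-transparency $c_{\delta,\delta} = -1$ of the fermion is incompatible with $\delta$ lying in the M\"uger center of such a subcategory.

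The step I expect to be the main obstacle is reducing an arbitrary fermion-fixed simple object to this $\sqrt 2$ case; put differently, proving directly that a transparent fermion which is the \emph{only} nontrivial invertible object can fix no simple object. The natural line of attack is to replace $\C$ by the subcategory $\langle X \rangle$, which again contains $\delta$, is slightly degenerate by the Deligne argument above, and inherits the absence of nontrivial non-degenerate and Tannakian subcategories, and then to extract from a fixed object an Ising subcategory or another forbidden piece. The difficulty is that $\langle X \rangle$ may equal $\C$ and $\C$ may be completely anisotropic, so neither passing to a proper subcategory nor to a quotient by a connected \'etale algebra (as in Proposition \ref{cor-etale}) is automatically available; controlling $X \otimes X^*$ finely enough to locate a simple object of Frobenius--Perron dimension $\sqrt 2$ fixed by $\delta$ is exactly where the hypothesis excluding non-degenerate subcategories must be used decisively, and where I expect the real work to lie.
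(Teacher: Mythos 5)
Your analysis of the M\"uger center and your proof of part (i) coincide with the paper's argument step for step: $\Z_2(\C)$ is symmetric, hence super-Tannakian by Deligne's theorem; the absence of nontrivial Tannakian and non-degenerate subcategories forces $\Z_2(\C) \cong \svect$, so $\C$ is slightly degenerate; the same reasoning applied to $\C_{pt}$ shows that $\C_{pt}$ is slightly degenerate, and then \cite[Proposition 2.6 (ii)]{ENO2} together with the hypothesis on non-degenerate subcategories gives $\C_{pt} = \Z_2(\C) \cong \svect$. (Your remark that $\C = \vect$ must be excluded is correct, and is implicit in the paper's statement.)

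Your proof of part (ii), however, has a genuine gap, and it is exactly the one you flag at the end. The paper settles (ii) in one line by citing \cite[Proposition 2.6 (i)]{ENO2}: in \emph{any} slightly degenerate braided fusion category, the transparent fermion $g$ satisfies $g \otimes X \ncong X$ for every simple object $X$, with no restriction on $\FPdim X$. This is part (i) of the very proposition whose part (ii) you invoked moments earlier to factor $\C_{pt}$. Your substitute argument covers only the case $\FPdim X = \sqrt 2$, where $X \otimes X^* \cong \1 \oplus \delta$ and one extracts a forbidden Ising subcategory; for a fermion-fixed simple object of larger dimension, $X \otimes X^*$ has non-invertible simple summands, there is no evident Ising (or otherwise forbidden) subcategory to extract, and, as you yourself observe, passing to $\langle X \rangle$ does not help since it may be all of $\C$. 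The point is that the conclusion $G[X] = \1$ is not a consequence of the subcategory-exclusion hypotheses you are manipulating: it is a general structural property of slightly degenerate categories, whose proof in \cite{ENO2} rests on the fermionic self-braiding $c_{\delta,\delta} = -\id$ rather than on locating subcategories. Without importing that result (or reproducing its proof), your argument for (ii) is incomplete, and the reduction you hope for is not available.
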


\begin{proof} Consider the M\" uger center $\Z_2(\C)$ of $\C$. Then $\Z_2(\C)$
is a symmetric fusion subcategory and therefore it is super-Tannakian. 
The assumptions on $\C$ imply that $\Z_2(\C) \ncong \vect$ and also
that $\Z_2(\C)$ contains no nontrivial
Tannakian subcategories. Hence $\Z_2(\C) \cong \svect$ \cite[Section
2.4]{ENO2} and therefore $\C$ is slightly degenerate. Note that $\Z_2(\C)
\subseteq \C_{pt}$.
Moreover, since $\C_{pt}$ cannot contain any nontrivial non-degenerate or
Tannakian fusion
subcategory, then it is slightly degenerate as well.
By \cite[Proposition 2.6 (ii)]{ENO2}, every slightly degenerate pointed braided
fusion category factorizes in the form $\svect \boxtimes \B$, where $\B$ is a
pointed non-degenerate braided fusion category.  This implies that in our case
$\C_{pt} = \Z_2(\C) \cong
\svect$, whence we get part (i).
Let $\1 \neq g \in \svect$ be the unique nontrivial (fermionic)
invertible object. If $X \in \C$ is a simple
object, we have $g \otimes X \ncong X$ \cite[Proposition 2.6 (i)]{ENO2}.  This
implies part (ii), since by (i), $g$ is the only nontrivial invertible object of
$\C$.
\end{proof}

It is well-known that if all the character degrees of a finite group $G$ are
powers of a prime number $p$, then $G$ is solvable \cite{isaacs}.
The following theorem extends this result to braided fusion categories. Some
instances of the theorem were obtained previously in \cite[Theorem
7.3]{char-deg}
and \cite[Theorem 1.1]{cd2-wint}. 

\begin{theorem}\label{fpdim-pn} Let $\C$ be a braided fusion category such that
$\FPdim \C \in
\mathbb Z$. 
Suppose that $p$ is a prime number such that $\FPdim X$ is a power of $p$, for
all simple object $X \in \C$.
Then $\C$ is solvable.  
\end{theorem}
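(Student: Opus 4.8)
The plan is to argue by induction on $\FPdim\C$, which is a natural number because $\C$ is weakly integral; note first that the hypothesis forces every simple object to have integer dimension, so $\C$ is in fact integral. The base case $\FPdim\C=1$ is trivial, and if $\C$ is pointed then its group of invertible objects is abelian, hence solvable, so $\C$ is solvable. Assuming $\C$ is not pointed, I would split into three situations according to the subcategory structure of $\C$: (i) $\C$ contains a nontrivial Tannakian subcategory; (ii) $\C$ contains a proper nontrivial non-degenerate fusion subcategory; (iii) neither occurs. Since every simple object of any fusion subcategory of $\C$ is a simple object of $\C$, all three reductions preserve the standing hypothesis that simple objects have Frobenius--Perron dimension a power of $p$.

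In situation (i), let $\E\cong\Rep G$ be a nontrivial Tannakian subcategory. The simple objects of $\E$ are the irreducible representations of $G$, so all character degrees of $G$ are powers of $p$; by the classical group-theoretic result \cite{isaacs}, $G$ is solvable. I would then de-equivariantize and invoke Proposition \ref{tann-inher}, which reduces solvability of $\C$ to solvability of the braided fusion category $\C^0_G$ together with the (already established) solvability of $G$. Since $\FPdim\C^0_G\le\FPdim\C_G=\FPdim\C/|G|<\FPdim\C$, the induction applies once I verify that $\C^0_G$ retains the power-of-$p$ property. For this I would use that the forgetful functor $\C\cong(\C_G)^G\to\C_G$ is a tensor functor, hence preserves Frobenius--Perron dimensions, and that by Clifford theory the image of a simple object $X$ of $\C$ is a multiple $m$ of a sum over a single $G$-orbit $\{Y_1,\dots,Y_s\}$ of simple objects of $\C_G$ of a common dimension $d$; comparing dimensions gives $p^k=\FPdim X=m\,s\,d$, so $d=p^k/(ms)$ is a positive rational which is an algebraic integer, hence a positive integer dividing $p^k$, that is, a power of $p$. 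As the forgetful functor is dominant this covers every simple object of $\C_G$, and a fortiori of its braided neutral component $\C^0_G$.

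In situation (ii), a proper nontrivial non-degenerate fusion subcategory $\D\subseteq\C$ yields a factorization $\C\cong\D\boxtimes\Z_2(\D,\C)$ as braided fusion categories \cite[Theorem 3.13]{DGNOI}, with both factors proper, braided and of strictly smaller dimension; since the dimension of a product of simple objects is the product of their dimensions, both factors again have simple objects of $p$-power dimension, so by induction they are solvable and hence so is $\C$. In situation (iii), $\C$ has neither a nontrivial Tannakian subcategory nor a proper nontrivial non-degenerate one. If $\C$ is degenerate then it has no nontrivial non-degenerate subcategory at all, so Lemma \ref{sl-deg} applies and gives $\C_{pt}=\Z_2(\C)\cong\svect$ and $G[X]=\1$ for every simple $X$; then \eqref{xx*} reads $X\otimes X^*\cong\1\oplus\bigoplus_{\FPdim Y>1}m_Y\,Y$, and taking Frobenius--Perron dimensions modulo $p$ forces $(\FPdim X)^2\equiv 1\pmod p$, which fails for any non-invertible $X$; hence $\C$ is pointed, contrary to assumption, so this case does not arise. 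If instead $\C$ is non-degenerate, applying \eqref{xx*} to a non-invertible simple $X$ and reducing modulo $p$ shows $p\mid|G[X]|$, so $G(\C)\neq\1$ and $\C_{pt}$ is a proper nontrivial pointed subcategory; having no Tannakian and no nontrivial non-degenerate subcategory, it must, by Deligne's theorem and \cite[Proposition 2.6]{ENO2}, equal $\svect$. Consequently $\C_{ad}=\Z_2(\C_{pt},\C)$ \cite[Corollary 3.27]{DGNOI} has dimension $\FPdim\C/2$, is braided with $p$-power simple dimensions, and is solvable by induction; finally $\C$ is a faithful $\mathbb Z_2$-extension of $\C_{ad}$, its universal grading group having order $\FPdim\C/\FPdim\C_{ad}=2$, and since solvability is preserved under extensions by solvable groups, $\C$ is solvable.

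The main obstacle I anticipate is situation (i): confirming that the power-of-$p$ hypothesis descends to $\C^0_G$. The delicate point is that $|G|$ may be divisible by primes other than $p$, so the crude count $\FPdim\C_G=\FPdim\C/|G|$ is not enough; it is the combination of the orbit structure supplied by Clifford theory with the fact that Frobenius--Perron dimensions are algebraic integers that pins the simple dimensions of $\C_G$ down to powers of $p$.
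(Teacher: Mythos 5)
Your argument reproduces, in essentially the same order, the paper's own proof: reduce to producing a nontrivial Tannakian subcategory (then de-equivariantize, apply Isaacs' theorem to get $G$ solvable, and conclude by Proposition \ref{tann-inher} and induction on $\C^0_G$); use \eqref{xx*} modulo $p$ together with Lemma \ref{sl-deg} to exclude the case of no nontrivial non-degenerate or Tannakian subcategories; split off a proper nontrivial non-degenerate subcategory via the factorization $\C\cong\D\boxtimes\Z_2(\D,\C)$ of \cite[Theorem 3.13]{DGNOI}; and, when $\C$ itself is non-degenerate, pass to the proper subcategory $\C_{ad}=\Z_2(\C_{pt},\C)$ and use that the universal grading group of a braided category is abelian. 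The one place where you genuinely diverge is the descent of the power-of-$p$ hypothesis to the de-equivariantization: the paper simply cites \cite[Corollary 2.13]{fusionrules-equiv}, whereas you give a self-contained proof via Clifford theory for equivariantizations (constant multiplicity along a $G$-orbit, $\FPdim X=msd$, and a rational algebraic integer is an integer). That argument is correct and is a reasonable substitute for the citation; your identification of this as the delicate point is accurate.

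There is, however, one real flaw: your opening claim that \emph{the hypothesis forces every simple object to have integer dimension, so $\C$ is in fact integral} is false under the intended reading of the statement. The hypothesis $\FPdim\C\in\mathbb Z$ only guarantees $(\FPdim X)^2\in\mathbb Z$, and ``power of $p$'' here is meant to include half-integer powers $p^{k/2}$ --- the paper states this explicitly in the remark immediately following the theorem. For instance, an Ising braided category has a simple object of dimension $\sqrt 2$ and satisfies the hypothesis with $p=2$ without being integral, and the theorem is intended to apply to it. As written, your proof covers only the integral case. The gap is minor and repairable with tools you already use: if $\C$ is not integral, then $\C$ is a faithful $U(\C)$-extension of its adjoint subcategory $\C_{ad}$, which is integral by \cite[Proposition 8.27]{ENO}; the simple objects of $\C_{ad}$ are simple in $\C$, so the inductive hypothesis makes $\C_{ad}$ solvable, and since $\C$ is braided the group $U(\C)$ is abelian, whence $\C$ is solvable as an extension by an abelian group. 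This two-line reduction is exactly how the paper opens its proof, and your induction should be run on the full (weakly integral) statement so that it can be invoked for $\C_{ad}$.
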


Note that since the Frobenius-Perron dimension of $\C$ is an integer, we have
$(\FPdim X)^2 \in \mathbb Z$, for all $X \in \Irr(\C)$ \cite[Proposition
8.27]{ENO}. Therefore the possible
powers of $p$ that can occur as simple Frobenius-Perron dimensions in $\C$ are
half-integer powers.

\begin{proof} The proof is by induction on $\FPdim \C$. We may assume that $\C$
is integral. Otherwise, $\C$ is a $U(\C)$-extension of its integral fusion
subcategory $\C_{ad}$, where $U(\C)$ denotes the universal grading group of
$\C$. By induction, $\C_{ad}$ is solvable. Since $\C$ is braided, its universal
grading group is abelian, and therefore $\C$ is also solvable.  

It will be enough to
show that $\C$ contains a nontrivial Tannakian
subcategory $\E$. In such case, $\E \cong \Rep G$ for some finite group $G$ and
$G$
is solvable, because $\dim Y = p^m$, $m \geq 0$, for all simple objects $Y \in
\Rep G$. 
Moreover, it follows from \cite[Corollary 2.13]{fusionrules-equiv}, that the
Frobenius-Perron dimensions of simple objects in the
de-equivariantization $\C_G$,  and thus also in its fusion subcategory $\C_G^0$,
are powers of $p$ as well. Since $\FPdim \C^0_G \leq \FPdim \C_G  = \FPdim \C /
|G| < \FPdim \C$
and $\C_G^0$ is braided, then $\C_G^0$ is solvable, by
induction. Hence so is $\C$, by Proposition \ref{tann-inher}.

In view of the relations \eqref{xx*}, the assumption implies that for every
simple object $X$ of $\C$ the order of the group $G[X]$ is divisible by $p$.
If $\C$ contains no nontrivial non-degenerate or
Tannakian subcategories, then Lemma \ref{sl-deg} applies, and we obtain that
$G[X] = \1$, for all simple object $X \in \C$, which is a contradiction.

We may thus assume that $\C$ contains a nontrivial non-degenerate fusion
subcategory.
Suppose first that $\C$ is itself non-degenerate. Since $p$ divides  $\FPdim
\C_{pt}$, then $\C_{pt} \neq \vect$.  
 Hence  $\C_{ad} = (\C_{pt})' \subsetneq
\C$ and, by induction, $\C_{ad}$ is solvable.  Then so is $\C$, because it is a
$U(\C)$-extension of $\C_{ad}$ and $U(\C)$ is abelian. 
If, on the other hand, $\D \subsetneq \C$ is a nontrivial non-degenerate fusion
subcategory, then $\C \cong \D \boxtimes \D'$ and $\FPdim \D, \FPdim \D' <
\FPdim \C$.
Hence $\D$ and $\D'$ are both solvable by induction and therefore so is $\C$.
This completes the proof of the theorem.
\end{proof}

\begin{corollary} Let $\C$ be a non-degenerate braided fusion category and let
$p$ be a prime number. Suppose that $\FPdim \C = p^a c$, where $a \geq 0$ is an integer, and
$c$ is a square free natural number. Then $\C$ is solvable.
\end{corollary}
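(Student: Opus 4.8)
The plan is to argue by induction on $\FPdim \C$, using that the hypothesis is preserved under the two reduction moves available for a non-degenerate braided category. First I would reduce to the case where $\C$ is prime: if $\D \subsetneq \C$ is a proper nontrivial non-degenerate fusion subcategory, then $\C \cong \D \boxtimes \Z_2(\D,\C)$ by \cite[Theorem 3.13]{DGNOI}, and since $\FPdim \D \cdot \FPdim \Z_2(\D,\C) = p^a c$ with $c$ square-free, each factor again has the form (a power of $p$) times (a square-free number). Both factors are non-degenerate of strictly smaller dimension, hence solvable by induction, and therefore so is $\C$. Thus I may assume $\C$ is prime.

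The decisive move is the Tannakian reduction. Suppose $\C$ contains a nontrivial Tannakian subcategory $\E \cong \Rep G$. Then $|G|^2$ divides $\FPdim \C = p^a c$; because $c$ is square-free, no prime $q \neq p$ divides $\FPdim \C$ to a power $\geq 2$, which forces $|G|$ to be a power of $p$, say $|G| = p^{k}$. In particular $G$ is a $p$-group, hence solvable. The neutral component $\C^0_G$ is a non-degenerate braided fusion category with $\FPdim \C^0_G = \FPdim \C / |G|^2 = p^{a-2k} c$, again of the required form and strictly smaller, so $\C^0_G$ is solvable by induction. Proposition \ref{tann-inher} then gives that $\C$ is solvable. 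This disposes of every case in which $\C$ has a nontrivial Tannakian subcategory.

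It remains to treat a prime non-degenerate $\C$ with no nontrivial Tannakian subcategory. If $\C$ is pointed it is nilpotent, hence solvable, so assume $\C$ is not pointed. Here I would follow the pattern of Theorem \ref{sol-non-deg}: when $\C_{\ad} \subsetneq \C$, the subcategory $\C_{\ad}$ inherits the absence of nontrivial Tannakian subcategories (any such would lie in $\C$) and, by primality of $\C$, the absence of nontrivial non-degenerate subcategories, so Lemma \ref{sl-deg} applies to $\C_{\ad}$ and yields that $\C_{\ad}$ is slightly degenerate with $(\C_{\ad})_{pt} = \Z_2(\C_{\ad}) \cong \svect$ and $G[X] = \1$ for every simple $X \in \C_{\ad}$. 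Since the fermion $f$ then satisfies $f \otimes X \ncong X$ on non-invertible simple objects, these pair off into orbits of equal dimension, which forces $4 \mid \FPdim \C$ and hence $p = 2$; pushing the analysis further (again as in Theorem \ref{sol-non-deg}, using $\C_{\ad} = \Z_2(\C_{pt},\C)$ and double centralization to pin down $\C_{pt} \cong \svect$ and the size of $\C_{\ad}$) should drive $\FPdim \C$ down to a power of $2$, so that $\C$ is solvable by the $p^aq^b$-theorem of \cite{ENO2} (concretely, $\C$ turns out to be an Ising category of dimension $4$).

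The main obstacle is exactly this last case. The square-free hypothesis cannot be consumed by the Tannakian reduction — for an odd prime $q \mid c$ the category can contain no $\Rep \mathbb Z_q$, since $q^2 \nmid \FPdim \C$ — so the odd part of $\FPdim \C$ must be eliminated directly, and one must also separately exclude the degenerate possibility that $\C$ has no nontrivial invertible object (where $\C_{\ad} = \C$ and Lemma \ref{sl-deg} does not apply as stated). The delicate point is to show, \emph{without} assuming solvability in advance, that a prime non-degenerate braided category of dimension $p^a c$ with no nontrivial Tannakian subcategory carries no odd square-free part in its dimension; this is a dimension-counting refinement of Theorem \ref{sol-non-deg}, and I expect it to be where the real work lies. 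A cleaner alternative that bypasses much of this is to use the divisibility $(\FPdim X)^2 \mid \FPdim \C$ (valid for non-degenerate braided categories through integrality of the formal codegrees) together with square-freeness to conclude that every simple object has Frobenius-Perron dimension a power of $p$, and then to invoke Theorem \ref{fpdim-pn} directly; the residual difficulty there is ruling out simple dimensions of the shape $\sqrt{q}$ for a prime $q \neq p$.
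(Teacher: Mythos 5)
Your proposal is not a complete proof: both routes you describe terminate in an acknowledged unresolved case, and that case is genuinely out of reach of the tools you deploy. In your main inductive route, the terminal case --- $\C$ prime, non-pointed, with no nontrivial Tannakian subcategory --- cannot be handled by ``following the pattern of Theorem \ref{sol-non-deg}'', because that theorem (and Proposition \ref{solv-tann}, on which its proof rests) takes solvability as a \emph{hypothesis}, which is precisely what you are trying to establish; your sketch (Lemma \ref{sl-deg} applied to $\C_{\ad}$, a fermion-pairing count forcing $p=2$, then ``pushing the analysis further'') is not an argument, and you yourself note that Lemma \ref{sl-deg} does not even apply when $\C_{\ad}=\C$. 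So the induction never closes. Your steps 1 and 2 (the prime reduction and the Tannakian reduction, including the observation that $|G|^2 \mid p^a c$ forces $|G|$ to be a power of $p$) are correct, but they never confront the hard case.

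The missing idea is cheap, and it completes exactly your ``cleaner alternative'' --- which is in fact the paper's proof. If $\C$ is integral, then $(\FPdim X)^2 \mid \FPdim \C = p^a c$ \cite[Theorem 2.11 (i)]{ENO2} together with square-freeness of $c$ forces $\FPdim X$ to be a power of $p$ for every simple $X$, and Theorem \ref{fpdim-pn} gives solvability outright; this much you have. If $\C$ is not integral, the ``residual difficulty'' of simple objects of dimension $\sqrt{q}$ is not resolved by excluding them from $\C$: instead, by \cite[Theorem 3.10]{gel-nik}, $\C$ is a $G$-extension of an \emph{integral} fusion subcategory $\D$, where $G$ is an elementary abelian $2$-group. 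Every simple object of $\D$ is a simple object of $\C$ with integer Frobenius--Perron dimension whose square divides $p^a c$, hence is a power of $p$; so the braided category $\D$ is solvable by Theorem \ref{fpdim-pn}, and $\C$, being an extension of $\D$ by the solvable group $G$, is solvable by the stability of solvability under extensions by solvable groups recalled in the preliminaries (see \cite{ENO2}). Note that the resulting proof is direct --- no induction on $\FPdim \C$, no reduction to the prime case, no Tannakian reduction --- so the entire scaffolding of your first route can be discarded.
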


\begin{proof} Let $X \in \C$ be a simple object. Since $\C$ is non-degenerate,
then $(\FPdim X)^2$ divides $\FPdim \C$ \cite[Theorem 2.11 (i)]{ENO2}.
If $\C$ is integral,  then $\FPdim X$ must be a power of $p$ for all $X \in
\Irr(\C)$ and therefore $\C$ is solvable, by Theorem \ref{fpdim-pn}.
Suppose next that $\C$ is not integral. Then $\C$ is a $G$-extension of an
integral fusion subcategory $\D$, where $G$ is an elementary abelian $2$-group
\cite[Theorem 3.10]{gel-nik}. Again in this case, we get that the
Frobenius-Perron dimension of a simple object of $\D$ is a power of $p$ and
therefore the braided fusion category $\D$
is solvable, by Theorem \ref{fpdim-pn}. Then $\C$, being a $G$-extension of
$\D$,
is also solvable. 
\end{proof}

\begin{theorem}\label{fact-wgt} Let $p$ and $q$ be prime numbers. Let $\C$ be a
non-degenerate braided fusion category such that $\FPdim \C = p^aq^bc$, where
$a$ and $b$ are nonnegative integers, and $c$ is  a square-free natural number.
Then $\C$ is weakly group-theoretical.
\end{theorem}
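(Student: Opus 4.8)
The plan is to argue by induction on $\FPdim\C$, reducing $\C$ to a rigid ``core'' and then appealing to the Burnside-type theorem of \cite{ENO2} that a fusion category of dimension $p^aq^b$ is solvable. We may assume $p\neq q$ and, after absorbing common factors, that $p,q\nmid c$. First I would dispose of the non-integral case: since $\FPdim\C\in\mathbb Z$, the category $\C$ is weakly integral, so by \cite[Theorem 3.10]{gel-nik} it is a faithful $G$-extension of its maximal integral fusion subcategory $\C_{\mathrm{int}}$, with $G$ an elementary abelian $2$-group; as the class of weakly group-theoretical categories is stable under extensions \cite[Proposition 4.1]{ENO2}, it suffices to treat $\C_{\mathrm{int}}$. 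Accordingly, I would prove by induction the slightly more general statement that \emph{every integral braided fusion category whose Frobenius-Perron dimension has the form $p^aq^bc$ with $c$ square-free, and all of whose simple objects have dimension $p^iq^j$, is weakly group-theoretical.} The theorem follows, since when $\C$ is non-degenerate the bound $(\FPdim X)^2\mid\FPdim\C$ \cite[Theorem 2.11]{ENO2} together with $c$ square-free forces $\FPdim X=p^iq^j$ for every simple $X$; and these dimension hypotheses are inherited by $\C_{\mathrm{int}}$.

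Within this inductive framework I carry out three reductions, each passing to a strictly smaller category of the same type. First, if the adjoint subcategory $\C_{\ad}$ is proper, then $\C$ is a faithful $U(\C)$-extension of $\C_{\ad}$, the grading group $U(\C)$ being abelian because $\C$ is braided; by induction $\C_{\ad}$ is weakly group-theoretical, hence so is $\C$. We may thus assume $\C=\C_{\ad}$ is \emph{perfect}. Second, in the perfect case the M\"uger center $\Z_2(\C)$ is symmetric and has no nontrivial invertible objects, so it is Tannakian rather than merely super-Tannakian; hence either $\Z_2(\C)=\vect$, in which case $\C$ is non-degenerate, or $\Z_2(\C)$ is a nontrivial Tannakian subcategory. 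More generally, whenever $\C$ contains any nontrivial Tannakian subcategory $\E\cong\Rep H$, the neutral de-equivariantization $\C^0_H$ is smaller and of the same type \cite[Corollary 2.13]{fusionrules-equiv}, and Proposition \ref{tann-inher} reduces $\C$ to it. We may therefore assume that $\C$ is non-degenerate and contains no nontrivial Tannakian subcategory. Third, if such a $\C$ is not prime, a proper non-degenerate subcategory $\D$ splits it as $\C\cong\D\boxtimes\Z_2(\D,\C)$ \cite[Theorem 3.13]{DGNOI} into smaller factors of the same type, each weakly group-theoretical by induction.

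This leaves the core case: $\C$ is integral, prime, perfect, non-degenerate, has simple dimensions $p^iq^j$, and contains no nontrivial Tannakian subcategory. I claim that here $c=1$, i.e.\ $\FPdim\C=p^aq^b$. Granting this, $\C$ is solvable by the $p^aq^b$-theorem of \cite{ENO2}, and in particular weakly group-theoretical, which closes the induction.

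The main obstacle is precisely the elimination of a prime $r\mid c$ with $r\neq p,q$ in this core case. Such an $r$ divides $\FPdim\C$ exactly once, hence divides no simple dimension, and one would like to conclude from this that $\C$ contains a nontrivial Tannakian subcategory, contradicting the standing assumption. When $\C$ is not perfect this is straightforward: by Cauchy's theorem the abelian group $G(\C)$ contains an element of order $r$, and for odd $r$ the pointed subcategory it generates is either non-degenerate, contradicting primeness, or Tannakian. In the perfect core, however, $G(\C)=\1$, so the usual way of detecting $r$ through an invertible object of order $r$ is unavailable, and producing a Tannakian subcategory from the bare divisibility $r\mid\FPdim\C$ (with $r$ dividing no simple dimension) is a genuine Burnside-type input, requiring the module-category and Drinfeld-center techniques developed in \cite{ENO2} for the $p^aq^b$-theorem. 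This is the step I expect to be the crux of the argument.
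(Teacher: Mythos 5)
There are two genuine gaps, one of which you yourself flag but do not close.

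First, your reduction to the ``perfect core'' rests on a false claim, namely that $\C=\C_{\ad}$ forces $\Z_2(\C)$ to contain no nontrivial invertible object and hence to be Tannakian or trivial. The true implication goes the other way: for any braided fusion category $\C_{\ad}\subseteq(\C_{pt})'$, equivalently $\C_{pt}\subseteq(\C_{\ad})'$ (\cite[Corollary 3.27]{DGNOI}), so perfectness gives $\C_{pt}\subseteq\Z_2(\C)$, i.e.\ \emph{every} invertible object is transparent. For instance $\Rep S_3$ is perfect and its M\"uger center (the whole category) contains the sign representation; worse for your purposes, the adjoint subcategory of $\C(\mathfrak{sl}(2),6)$ is perfect and its M\"uger center contains a fermion, so it is super-Tannakian but neither trivial nor Tannakian. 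Thus a perfect category with no nontrivial Tannakian subcategory need not be non-degenerate: it can be slightly degenerate with $\C_{pt}=\Z_2(\C)\cong\svect$, and your argument silently discards exactly this case. In the paper's proof this situation is not vacuous; it is where most of the work happens, via Lemma \ref{sl-deg}, \cite[Propositions 2.6 and 7.4]{ENO2} and Theorem \ref{fpdim-pn}.

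Second, and fatally, the step you call the crux --- eliminating a prime $r\mid c$, $r\neq p,q$, in the core case, equivalently producing a nontrivial Tannakian subcategory there --- is never proved; you only predict that it needs ``Burnside-type'' input. But that step essentially \emph{is} the theorem: without it the induction does not close. The needed input already exists as a citable result, and this is how the paper proceeds: if some non-invertible simple object has prime power dimension, then \cite[Corollary 7.2]{ENO2} yields a nontrivial symmetric subcategory $\D$, which (absent Tannakian subcategories) must be $\svect$, and one then analyzes the slightly degenerate centralizer $\D'$ using \cite[Proposition 7.4]{ENO2}, Lemma \ref{sl-deg} and Theorem \ref{fpdim-pn}; if instead no non-invertible simple object has prime power dimension, then $pq\mid\FPdim X$ for every non-invertible simple $X$, whence by \eqref{xx*} $pq$ divides $\FPdim(\C_{pt}\cap\C_{\ad})$, and $\C_{pt}\cap\C_{\ad}=\C_{pt}\cap(\C_{pt})'$ is symmetric of Frobenius--Perron dimension $>2$, hence contains a nontrivial Tannakian subcategory. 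None of these tools appear in your proposal. (A smaller slip: your parenthetical use of Cauchy's theorem in the non-perfect case presupposes $r\mid|G(\C)|$, which does not follow from $r\mid\FPdim\C$ and $r\nmid\FPdim X$ for all simple $X$.) Your outer reductions --- to integral categories via \cite[Theorem 3.10]{gel-nik} and stability under extensions, to $\C_{\ad}$, through Tannakian de-equivariantizations via Proposition \ref{tann-inher} and \cite[Corollary 2.13]{fusionrules-equiv}, and splitting off non-degenerate factors --- are correct and parallel the paper's, but the two issues above mean the proof does not go through.
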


\begin{proof} Observe that, after
eventually replacing $c$ by an appropriate divisor, we may assume that $c$ is
relatively
prime to $p$ and $q$. The proof of the theorem is by induction on $\FPdim \C$. 
As in the proof of Theorem \ref{fpdim-pn}, we may assume that $\C$ is integral
and it will be enough to show that $\C$ contains a nontrivial Tannakian
subcategory. 

Let us assume that $\C$ is not nilpotent (and in particular it is not pointed), otherwise there is nothing
to prove. Since $\C$ is non-degenerate, then for every simple object $X \in \C$,
we have that $(\FPdim X)^2$ divides $\FPdim \C$.
Hence $a \geq 2$ or $b \geq 2$ and moreover, for every simple object $X$, we
have $\FPdim X = p^nq^m$, for some $n, m \geq 0$.

Suppose first that $\C$ has no non-invertible simple object of prime power
dimension. Then $pq|\FPdim X$, for all non-invertible $X \in \Irr(\C)$.
In view of the relations \eqref{xx*}, this implies that for any fusion
subcategory $\D$, such that $\D$ is not pointed, the Frobenius-Perron
dimension of $\D_{pt} = \D \cap \C_{pt}$ is divisible by $pq$.
In particular, $\FPdim \C_{pt} \cap \C_{ad}$ is divisible by $pq$ and
thus it is bigger than $2$. But, since $\C$ is non-degenerate, then
$\C_{pt} = \C_{ad}'$ and therefore the category $\C_{pt} \cap \C_{ad}$ is
symmetric. It follows that $\C_{pt} \cap \C_{ad}$ contains a nontrivial
Tannakian subcategory and we are done. 

Suppose next that $\C$ has a simple object of positive prime power dimension. By
\cite[Corollary 7.2]{ENO2}, $\C$
contains a nontrivial symmetric subcategory $\D$.
We may assume that $\D$ contains no nontrivial Tannakian subcategory, and thus
$\D \cong \svect$. Since $\D'' = \D$, then $\D'$ is a slightly degenerate
fusion category.

If $\D'$ has a simple object of odd prime power dimension, then it contains a
nontrivial Tannakian subcategory by \cite[Proposition 7.4]{ENO2}, and we are
done. 
If $\FPdim X$ is divisible by $pq$ for all non-invertible simple object $X \in \D'$, then $pq$
divides the order of the group $G[X]$ for all non-invertible $X \in \Irr(\D')$, by
\eqref{xx*}. 
In view of Lemma \ref{sl-deg}, we may assume that $\D'$ contains a nontrivial
non-degenerate fusion subcategory $\B$.  
Then $\C \cong \B \boxtimes \B'$, where $\B$ and $\B'$ are both non-degenerate.
Then $\FPdim \B \FPdim \B' = \FPdim \C = p^aq^bc$ and  $\FPdim \B, \FPdim \B' <
\FPdim \C$. It follows by induction that $\B$ and $\B'$ are both weakly
group-theoretical and then so is $\C$.

It remains to consider the case where $\FPdim X = 2^m$, $m \geq 0$, for every
simple object $X$ of $\D'$. In this case, Theorem \ref{fpdim-pn} implies that
$\D'$ is solvable.
Then it follows from Proposition \ref{solv-tann}, that either $\D'$ contains a
nontrivial
Tannakian subcategory, in which case we are done, or $\D'$ is pointed.  Suppose
that $\D'$ is pointed. By
\cite[Proposition 2.6 (ii)]{ENO2}, $\D' \cong \svect \boxtimes \B$, where $\B$
is a pointed non-degenerate fusion category. 
If $\B$ is not trivial then, as before, $\C \cong \B \boxtimes \B'$, where $\B$
and $\B'$ are both non-degenerate and  $\FPdim \B, \FPdim \B' <
\FPdim \C$, hence $\C$ is weakly
group-theoretical, by induction.
If, on the other hand, $\B \cong \vect$, then $\FPdim \D' = 2$ and therefore
$\FPdim \C = \FPdim \D \FPdim \D' = 4$. Hence $\C$ is nilpotent and in
particular it is weakly group-theoretical as well. 
This completes the proof of the theorem. 
\end{proof}

\section{Non-degenerate braided fusion categories of low dimension}\label{low-dim}

As an application of Theorem \ref{fact-wgt} we prove in this section that
non-degenerate fusion categories of small dimension are weakly
group-theoretical.

\begin{theorem}\label{less1800} Let $\C$ be a weakly integral non-degenerate
fusion category
such that $\FPdim \C < 1800$. Then $\C$ is weakly group-theoretical.
 \end{theorem}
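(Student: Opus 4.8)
The plan is to reduce the statement to Theorem \ref{fact-wgt} by a divisibility analysis of $\FPdim \C$, and then to treat by hand the finitely many dimensions that escape that theorem. Since $\C$ is weakly integral, $N := \FPdim \C$ is a natural number. If $N$ admits a factorization $N = p^aq^bc$ with $p,q$ prime and $c$ square-free, then $\C$ is weakly group-theoretical by Theorem \ref{fact-wgt}. Such a factorization fails precisely when the prime decomposition of $N$ contains at least three distinct primes each occurring with exponent $\geq 2$. First I would record the elementary number-theoretic fact that the only natural numbers $N < 1800$ divisible by $p^2q^2r^2$ for three distinct primes $p,q,r$ are $N = 2^2\cdot 3^2\cdot 5^2 = 900$ and $N = 2^2\cdot 3^2\cdot 7^2 = 1764$: the next smallest such ``core'' is $(2\cdot3\cdot11)^2 = 4356$, while multiplying $900$ or $1764$ by any integer $\geq 2$, or raising one of the three exponents, already gives a number $\geq 1800$. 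It therefore remains to show that a weakly integral non-degenerate braided fusion category of dimension $900$ or $1764$ is weakly group-theoretical.

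For these two values I would use two structural reductions, both of which land inside the range covered by Theorem \ref{fact-wgt}. If $\C$ decomposes as $\C \cong \D \boxtimes \D'$ with $\D, \D'$ nontrivial non-degenerate, then $\FPdim \D$ and $\FPdim \D'$ are proper divisors of $N$ (and weakly integral, being dimensions of fusion subcategories); any proper divisor of $2^2\cdot3^2\cdot r^2$ has at most two primes of exponent $\geq 2$, so both factors have dimension of the form $p^aq^bc$ and are weakly group-theoretical, whence so is $\C$. If instead $\C$ contains a nontrivial Tannakian subcategory $\E \cong \Rep G$, then $\C^0_G$ is non-degenerate of dimension $N/|G|^2 < N$; one checks directly that $N/|G|^2$ is again of the form $p^aq^bc$ for every admissible $|G|$ (i.e.\ $|G|$ dividing $30$, respectively $42$), so $\C^0_G$ is weakly group-theoretical and hence, by Proposition \ref{tann-inher}, so is $\C$. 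Thus we are reduced to the case in which $\C$ is prime and contains no nontrivial Tannakian subcategory.

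The main obstacle is precisely this last case, which I would dispose of by adapting the argument of Theorem \ref{fact-wgt} to the three admissible primes $2, 3, r$, so that every simple object has $\FPdim X$ dividing $6r$. The non-integral case is reduced first: by \cite{gel-nik} a non-integral non-degenerate category is an extension by an elementary abelian $2$-group of an integral subcategory, which lets me peel off a copy of $\svect$ and pass to a slightly degenerate centralizer of good dimension. In the integral case I would examine the pointed part. Since $\C$ is prime and non-pointed, $\C_{pt}$ cannot be non-degenerate (else $\C \cong \C_{pt}\boxtimes(\C_{pt})'$ would not be prime), so $\Z_2(\C_{pt}) = \C_{pt}\cap\C_{ad}$ is a nontrivial symmetric subcategory whenever $\C_{pt} \neq \vect$; by the no-Tannakian hypothesis this forces $\Z_2(\C_{pt}) \cong \svect$, and peeling off the resulting fermion yields a slightly degenerate subcategory $\D'$ to which \cite[Proposition 7.4]{ENO2}, Theorem \ref{fpdim-pn}, Lemma \ref{sl-deg} and Proposition \ref{solv-tann} apply exactly as in Theorem \ref{fact-wgt}, producing either a Tannakian subcategory or a non-degenerate factor and so a contradiction.

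The genuinely new point, where I expect the difficulty to concentrate, is the residual configuration $\C_{pt} = \vect$ together with no simple object of prime-power dimension: here \cite[Corollary 7.2]{ENO2} cannot be invoked to manufacture a symmetric subcategory, and instead I would rule the configuration out by an explicit count, writing $N - 1$ as a sum of squares drawn from $\{6^2, (2r)^2, (3r)^2, (6r)^2\}$ and checking that no such representation exists for $N \in \{900, 1764\}$. (When $\C_{pt}=\vect$ and a prime-power simple \emph{does} occur, \cite[Corollary 7.2]{ENO2} gives a nontrivial symmetric subcategory, which is either Tannakian, returning us to the solved case, or contains a fermion, contradicting $\C_{pt}=\vect$.) This dimension count, together with the slightly-degenerate casework, is the part that requires care; the rest is bookkeeping around the two reductions above.
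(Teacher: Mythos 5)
Your proposal follows the same overall strategy as the paper (reduce to Theorem \ref{fact-wgt}, then handle the exceptional dimensions after assuming $\C$ prime with no nontrivial Tannakian subcategory), but it diverges in two substantive ways, one of which is actually a correction of the paper. The paper's proof asserts that every natural number $n<1800$ with $n\neq 900$ factors as $p^aq^bc$ with $c$ square-free; this is false, since $1764=2^2\cdot 3^2\cdot 7^2$ admits no such factorization, and the paper never treats that dimension. Your number-theoretic step is the right one --- the exceptional values below $1800$ are exactly $900$ and $1764$ --- and your arguments do cover both (your sum-of-squares counts check out: $899\equiv 3 \pmod 4$ forces three summands equal to $225$, leaving $224\neq 36a+100b$; likewise $1763$ forces three summands equal to $441$, leaving $440\neq 36a+196b$). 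The second divergence is in the integral case: the paper simply quotes the proof of \cite[Theorem 9.2]{ENO2}, that a non-degenerate integral category of dimension $p^2q^2r^2$ contains a nontrivial Tannakian subcategory, whereas you re-derive what is needed via the dichotomy on $\C_{pt}$ (fermion-peeling and the slightly degenerate machinery when $\C_{pt}\neq\vect$, the explicit count when $\C_{pt}=\vect$). That is longer but more self-contained, and it is what makes the $1764$ case accessible to you.

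Two soft spots to repair, both in directions where the paper is more careful than you are. First, in the non-integral case, the centralizer of $\svect$ inside $\C$ need not be integral, so \cite[Proposition 7.4]{ENO2} and the parity count do not apply to it as stated; you should instead run the analysis on the integral part $\D$ supplied by \cite{gel-nik}, of dimension $\FPdim\C/|E|$ with $|E|\in\{2,4\}$, as the paper does (it disposes of $|E|=4$ by Burnside \cite[Theorem 1.6]{ENO2}, since then $\FPdim\D=3^2 5^2$; alternatively, note that Lemma \ref{sl-deg} would force an odd-dimensional $\D$ to be slightly degenerate, which is absurd). Second, your phrase ``exactly as in Theorem \ref{fact-wgt}'' glosses over the fact that with three admissible primes the two-prime trichotomy of that proof is not literally available: after \cite[Proposition 7.4]{ENO2} excludes dimensions $3$ and $r$, the endgame is Lemma \ref{sl-deg} ($G[X]=\1$, so simples pair off under the fermion and each dimension occurs an even number of times, excluding $3r$ by a count) followed by the parity contradiction from \eqref{xx*} --- which is precisely the paper's closing argument for dimension $450$. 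With those two repairs your proof is complete, and it both fixes the paper's $1764$ gap and avoids importing the proof of \cite[Theorem 9.2]{ENO2}.
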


\begin{proof} Every natural number $n < 1800$ such that $n \neq 900$, factorizes
in the form $n = p^aq^bc$, where $p$ and $q$ are prime numbers, $a, b \geq 0$,
and $c$ is a square-free integer. 
In view of Theorem \ref{fact-wgt} it will be enough to consider the case where
$\FPdim \C = 900$.

We may assume that $\C$ is a prime non-degenerate fusion category, that is, $\C$
contains no nontrivial proper non-degenerate fusion subcategory, and in addition
$\C$ contains no nontrivial Tannakian subcategory. Indeed, if $\D \subseteq \C$
is a nontrivial proper non-degenerate fusion subcategory, then $\C \cong \D
\boxtimes \D'$ where $\D$ and $\D'$ are non-degenerate fusion subcategories of
Frobenius-Perron dimension strictly less than $900$, and thus weakly
group-theoretical. Then $\C$ is weakly group-theoretical in this case.  
Similary, if $\C$ contains a nontrivial Tannakian subcategory $\E \cong \Rep G$,
where $G$ is a finite group, $|G| > 1$, then the de-equivariantization $\C_G$ is
a $G$-crossed braided fusion category of Frobenius-Perron dimension strictly
less than $900$, whose neutral component $\C_G^0$ is non-degenerate and thus
weakly group-theoretical. Hence $\C_G$ and $\C$ are both weakly
group-theoretical as well.  

It follows from the proof of \cite[Theorem 9.2]{ENO2} that a non-degenerate
integral fusion category of Frobenius-Perron dimension $p^2q^2r^2$, where $p <
q< r$ are prime numbers, contains a nontrivial Tannakian subcategory. 
Hence we may assume that $\C$ is not integral. 

Therefore $\C$ is
an $E$-extension of an integral fusion subcategory $\D$, where $E$ is an
elementary abelian $2$-group \cite[Theorem 3.10]{gel-nik}. Then $|E| = 2$ or $4$
and $\FPdim \D = \FPdim \C / |E|$. 
Hence we may assume $\FPdim \D = \FPdim \C / 2$, because otherwise $\D$ and
therefore also $\C$ would be solvable, in view of \cite[Theorem 1.6]{ENO2}.
We may further assume that $\D$ contains no nontrivial non-degenerate or
Tannakian fusion subcategories. 
It follows from Lemma \ref{sl-deg} that  $\D$ is slightly degenerate, $\D_{pt}
\cong \svect$ and $G[X] = \1$, for all simple object $X \in \D$.

In addition, if $X \in \D$ is a simple object, then $(\FPdim X)^2$ divides
$900 = \FPdim \C$. Thus $\FPdim X = 1, 2, 3, 5, 6, 10$ or $15$. Since the
group of invertible objects of $\D$ is of order $2$ and $G[X] = \1$, then the
number of simple objects of $\D$ of a given Frobenius-Perron dimension must be
even. In
particular, since $\FPdim \D = 2 (15)^2$, then $\D$ cannot have simple objects
of Frobenius-Perron dimension $15$. Also, by \cite[Proposition 7.4]{ENO2}, $\D$
has no simple objects of Frobenius-Perron dimension $3$ or $5$.
Thus we conclude that the Frobenius-Perron dimension of every non-invertible
simple object $X$ of $\D$ is necessarily even. Decomposing $X \otimes X^*$ into
a sum of simple objects and using that $G[X] = \1$ we arrive to a contradiction;
see \eqref{xx*}.
This shows that $\C$ is weakly group-theoretical, as claimed.  
\end{proof}

The result in Theorem \ref{less1800} can be strengthened in the odd-dimensional
case. 
In fact, we have:

\begin{theorem}\label{oddless33075} Let $\C$ be a weakly integral non-degenerate
fusion category
such that $\FPdim \C$ is odd and $\FPdim \C < 33075$. Then $\C$ is solvable.
\end{theorem}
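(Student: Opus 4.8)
The plan is to mimic the structure of the proof of Theorem \ref{less1800}, but now using the stronger conclusion available in odd dimension, namely solvability rather than mere weak group-theoreticity, and invoking Theorem \ref{fpdim-pn} and the solvability results from \cite{ENO2} in place of Theorem \ref{fact-wgt}. First I would reduce to the integral case: since $\FPdim \C$ is odd, the grading group $E$ of \cite[Theorem 3.10]{gel-nik} (an elementary abelian $2$-group) must be trivial, so $\C$ is automatically integral. This is a genuine simplification compared to the even case, where the non-integral possibility forced the delicate slightly-degenerate analysis via Lemma \ref{sl-deg}.

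Next I would perform the standard reductions by induction on $\FPdim \C$. If $\C$ contains a nontrivial proper non-degenerate fusion subcategory $\D$, then $\C \cong \D \boxtimes \D'$ with both factors of strictly smaller odd dimension, hence both solvable by induction, and so is $\C$. If $\C$ contains a nontrivial Tannakian subcategory $\E \cong \Rep G$, then $G$ has odd order and is therefore solvable (by Feit--Thompson), while the neutral component $\C_G^0$ is non-degenerate of strictly smaller dimension, hence solvable by induction; Proposition \ref{tann-inher} then gives solvability of $\C$. So I may assume $\C$ is prime and contains no nontrivial Tannakian subcategory.

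The heart of the matter is then a purely arithmetic analysis of odd numbers less than $33075$. The key numerical fact to exploit is that for a non-degenerate $\C$, every simple $X$ satisfies $(\FPdim X)^2 \mid \FPdim \C$ \cite[Theorem 2.11 (i)]{ENO2}. The point of the bound $33075 = 3^3 \cdot 5^2 \cdot 7^2$ is presumably that every odd $n < 33075$ either factorizes as $p^a q^b c$ with $c$ square-free (so Theorem \ref{fact-wgt} applies, and in the odd integral case the resulting Tannakian subcategory upgrades weak group-theoreticity to solvability via the solvability of odd-order groups and Theorem \ref{fpdim-pn}), or else $n$ is one of a short list of exceptional values divisible by at least three distinct prime squares. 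I expect the main obstacle to be exactly this exceptional-value analysis: one must enumerate the odd $n < 33075$ not of the form $p^a q^b c$, which are multiples of $p^2 q^2 r^2$ for three distinct odd primes $p<q<r$, the smallest being $3^2 \cdot 5^2 \cdot 7^2 = 11025$, and handle each surviving case. For these I would argue as in Theorem \ref{less1800}: using that $\C$ is prime with no Tannakian subcategory forces, via Lemma \ref{sl-deg} and \cite[Proposition 7.4]{ENO2}, strong restrictions on the admissible simple Frobenius-Perron dimensions, and then decomposing $X \otimes X^*$ through \eqref{xx*} together with $G[X] = \1$ yields a contradiction. The delicate bookkeeping is in verifying that the counting argument on simple dimensions closes off every exceptional dimension in the range, which is where I anticipate the real work lies.
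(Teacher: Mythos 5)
Your overall skeleton (integrality is automatic in odd dimension, the generic dimensions are disposed of by Theorem \ref{fact-wgt}, and the real work is in the exceptional dimensions divisible by three odd prime squares) matches the paper, and your instinct about where the difficulty lies is right. But your plan for the exceptional case has a genuine gap. You propose to run the argument of Theorem \ref{less1800}: Lemma \ref{sl-deg}, \cite[Proposition 7.4]{ENO2}, and the $G[X]=\1$ counting via \eqref{xx*}. None of these tools is available here. In Theorem \ref{less1800} that machinery is applied not to $\C$ itself but to the \emph{proper} integral subcategory $\D$ of index $2$ supplied by the non-integrality of $\C$; in the odd case $\C$ is already integral, so no such $\D$ exists. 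You cannot apply Lemma \ref{sl-deg} to $\C$ directly: its hypothesis (no nontrivial non-degenerate subcategories) fails for any nontrivial non-degenerate $\C$, since $\C$ is such a subcategory of itself (primality only excludes \emph{proper} ones), and its conclusion --- that the category is slightly degenerate, i.e.\ contains $\svect$ --- is impossible in odd dimension, because $\FPdim \svect = 2$ would have to divide $\FPdim \C$. For the same reason \cite[Proposition 7.4]{ENO2}, a statement about slightly degenerate categories, has nothing to act on. The paper closes this case by an input your proposal is missing: by the proof of \cite[Lemma 9.3]{ENO2}, an integral non-degenerate braided fusion category of Frobenius--Perron dimension $p^2q^2r^2$ (here $11025 = 3^2 5^2 7^2$) contains a nontrivial \emph{symmetric} subcategory, and in odd dimension every symmetric fusion category is Tannakian (again because $\svect$ cannot embed); this contradicts your standing no-Tannakian assumption and closes the induction.

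Two smaller points. First, your ``upgrade'' from weak group-theoreticity to solvability is muddled: Theorem \ref{fact-wgt} as \emph{stated} produces no Tannakian subcategory, and Theorem \ref{fpdim-pn} plays no role here. The paper does this in one line: an odd-dimensional weakly group-theoretical fusion category is automatically solvable by \cite[Proposition 7.1]{char-deg}, which is exactly why it suffices throughout to prove weak group-theoreticity. Your alternative route (Feit--Thompson plus Proposition \ref{tann-inher} inside the induction) can be repaired, but only by re-running the proof of Theorem \ref{fact-wgt} branch by branch and noting that in your prime/no-Tannakian situation it forces $\C$ to be nilpotent, hence solvable as a braided nilpotent category. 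Second, your expectation of ``a short list'' of exceptional values, rather than a single one, is in fact correct: $27225 = 3^2\cdot 5^2\cdot 11^2 < 33075$ is also not of the form $p^aq^bc$ with $c$ square-free (the paper's arithmetic observation overlooks it), but it too is of the form $p^2q^2r^2$ and is disposed of by the same citation of \cite[Lemma 9.3]{ENO2}.
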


\begin{proof} It will be enough to show that $\C$ is weakly group-theoretical,
since any odd-dimensional weakly group-theoretical fusion category is
necessarily solvable \cite[Proposition 7.1]{char-deg}. 
The assumption that $\FPdim \C$ is odd implies furthermore that $\C$ is integral
\cite[Corollary 3.11]{gel-nik}.

Observe that an odd natural number $n < 33075 = 3^3 5^2 7^2$ such that $n \neq
11025$, factorizes
in the form $n = p^aq^bc$, where $p$ and $q$ are prime numbers, $a, b \geq 0$,
and $c$ is a square-free integer. By Theorem \ref{fact-wgt}, we only need to
consider the case where
$\FPdim \C = 11025 = 3^2 5^2 7^2$. In this case, it follows from the proof of
\cite[Lemma 9.3]{ENO2} that $\C$ contains a nontrivial symmetric (thus
Tannakian)
subcategory $\E \cong \Rep  G$. Then $\C_G^0$ is weakly group-theoretical and
hence so is $\C$. 
\end{proof}

\bibliographystyle{amsalpha}

\end{document}